\newtheorem{theorem}{Theorem}
\newtheorem{lemma}[theorem]{Lemma}
\theoremstyle{remark}
\newtheorem*{remark}{Remark}
\newtheorem*{Organization}{Organization}
\newtheorem*{question*}{Question}
\numberwithin{equation}{section}
\def\R{{\mathbb R}}
\thanks{This work was supported by JSPS Kakenhi grant numbers 18KK0073, 19H00644 and 19H01796 (Bez), 
16K05191 (Machihara), and 18KK0073, 19H00644 (Ozawa). }
\begin{document}

\title{Revisiting the Rellich inequality}
\author{Neal Bez}
\address{Department of Mathematics, Faculty of Science, Saitama
University, Saitama 338-8570, Japan}
\email{nealbez@mail.saitama-u.ac.jp}
\author{Shuji Machihara}
\address{Department of Mathematics, Faculty of Science, Saitama
University, Saitama 338-8570, Japan}
\email{machihara@rimath.saitama-u.ac.jp}
\author{Tohru Ozawa}
\address{Department of Applied Physics, Waseda University, Tokyo 169-8555, Japan}
\email{txozawa@waseda.jp}

\maketitle

\begin{abstract}
We revisit the Rellich inequality from the viewpoint of isolating the contributions from radial and spherical derivatives. This naturally leads to a comparison of the norms of the radial Laplacian and Laplace--Beltrami operators with the standard Laplacian. In the case of the Laplace--Beltrami operator, the three-dimensional case is the most subtle and here we improve a result of Evans and Lewis by identifying the best constant. Our arguments build on certain identities recently established by Wadade and the second and third authors, along with use of spherical harmonics.
\end{abstract}

\section{Introduction and main results}
We will consider the following Rellich inequality
\begin{align}\label{Rellich1}
\left(\frac{n(n-4)}{4}\right)^2\left\|\frac{f}{|x|^2}\right\|^2_2\le\|\Delta f\|^2_2,
\end{align}
where $\|\cdot\|_2$ denotes the $L^2$-norm on $\R^n$ with respect to Lebesgue measure, and $\Delta$ denotes the standard Laplacian on $\R^n$. This inequality is a ubiquitous object in the theory of partial differential equations and spectral theory. 
The reader may refer to \cite{a-s, ca, da-h, e-l, g-g-m, gh-m, RS, sch, t-z, ya} and references therein for further details, applications and wider perspectives. 

If one wishes to treat general functions $f$ in the Sobolev space $H^2(\R^n)$, it is necessary to restrict the dimension of the ambient space to $n\ge5$, since the norm on the left-hand side of \eqref{Rellich1} will be infinite for any continuous function with $f(0)\not=0$ when $n \leq 4$. Rellich \cite{Rellich, RellichBook} proved that \eqref{Rellich1} holds for any $f\in C^{\infty}_0(\R^n\backslash\{0\})$ when $n\ge3$, and when $n = 2$, in order for the inequality to hold, we must additionally assume
\begin{align} \label{e:P1=0}
\int_0^{2\pi}f(r\cos\theta,r\sin\theta)\cos\theta \, \mathrm{d}\theta
=\int_0^{2\pi}f(r\cos\theta,r\sin\theta)\sin\theta \, \mathrm{d}\theta=0.
\end{align}
In fact, even if we replace the constant on the left-hand side by any positive constant, \eqref{Rellich1} fails when $n = 2$ if we admit all $f\in C^{\infty}_0(\R^2\backslash\{0\})$.

A fresh perspective on the Rellich inequality \eqref{Rellich1} based on certain identities was recently brought to light by Wadade and the second and third authors in \cite{m-o-w4}. Here we continue to develop this perspective by examining the contribution arising from radial derivatives and spherical derivatives. It is well-known that the Laplacian decomposes as $\Delta=\Delta_r+\frac{1}{|x|^2}\Delta_{\mathbb{S}^{n-1}}$ where $\Delta_r$ denotes 
the radial Laplacian and $\Delta_{\mathbb{S}^{n-1}}$ denotes the Laplace--Beltrami operator. The radial Laplacian is given by
\[
\Delta_r f = \partial_r^2f + \frac{n-1}{|x|}\partial_r f,
\]
where $\partial_r$ is the radial derivative given by $\partial_r =\frac{x}{|x|}\cdot\nabla$. Also, we introduce the spherical-type derivative $L_j=\partial_j-\frac{x_j}{|x|}\partial_r$ for each $j=1,\ldots,n$, in which case   
we have the relation
\begin{equation}  \label{e:sumL_j2}
\Delta_{\mathbb{S}^{n-1}}=|x|^2\sum_{j=1}^nL_j^2.
\end{equation}
The first of our key identities is the following and, roughly speaking, may be viewed as a means of capturing the radial and spherical contributions to both sides of \eqref{Rellich1}. In the following statement, and throughout the paper, we use the notation
\[
\mathbf{R}_{n} = \frac{n(n-4)}{4}
\]
associated with the Rellich inequality \eqref{Rellich1}.
\begin{theorem} \label{t:MOWv2}
For $n \geq 2$ and $f \in C_0^{\infty}(\R^n\backslash\{0\})$ we have
\begin{align} \label{e:MOW_left}
\mathbf{R}_{n}^2 \bigg\|\frac{f}{|x|^2}\bigg\|_{2}^2 &= \|\Delta_r f \|_{2}^2 -  
\bigg\| \Delta_r f + \mathbf{R}_{n} \frac{f}{|x|^2}   \bigg\|_{2}^2  - 2\mathbf{R}_{n} \bigg\| \frac{f_*}{|x|}  \bigg\|_{2}^2 \\
 \label{e:MOW_leftv2}
& = \|\Delta_r f \|_{2}^2 - \Big(1+\frac12\mathbf{R}_n\Big) 
\bigg\| \Delta_r f + \mathbf{R}_{n} \frac{f}{|x|^2}   \bigg\|_{2}^2 + \mathbf{R}_{n} \|f_{\#}\|_{2}^2
\end{align}
and
\begin{align} \label{e:MOW_right}
\|\Delta f\|_{2}^2 = \|\Delta_r f \|_{2}^2 + \bigg\| \sum_{j=1}^n L_j^2f \bigg\|_{2}^2 + 2\mathbf{R}_{n} \sum_{j=1}^n \bigg\| \frac{L_jf}{|x|}  \bigg\|_{2}^2 + 2\bigg\langle  -\sum_{j=1}^n L_j^2 f_*, f_* \bigg\rangle,
\end{align}
where $f_* = \partial_r f  + \frac{n-4}{2} \frac{f}{|x|}$, $f_{\#} = \partial_r^2f+(n-3)\frac{\partial_rf}{|x|}
+ \frac{(n-4)^2}{4} \frac{f}{|x|^2} $.
\end{theorem}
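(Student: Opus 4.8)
The plan is to isolate the radial variable throughout. In \eqref{e:MOW_left} and \eqref{e:MOW_leftv2} every operator that occurs---$\Delta_r$, multiplication by a power of $|x|$, and $\partial_r$---acts only in the radial direction, so on writing $x=r\omega$ with $\omega\in\mathbb S^{n-1}$ and $u(r)=f(r\omega)$ both identities reduce, for each fixed $\omega$, to one-dimensional identities on $(0,\infty)$ against the weight $r^{n-1}\,\mathrm dr$, and integrating over $\omega$ returns the $L^2(\R^n)$ statements. The device I would use to simplify these radial identities is the substitution $g=|x|^{(n-4)/2}f$, which maps $C_0^\infty(\R^n\setminus\{0\})$ to itself and, by a direct computation, satisfies
\[
\Delta_r f+\mathbf R_n\tfrac{f}{|x|^2}=|x|^{-\frac{n-4}{2}}\Big(\partial_r^2 g+\tfrac{3}{|x|}\partial_r g\Big),\qquad f_*=|x|^{-\frac{n-4}{2}}\partial_r g,\qquad f_\#=|x|^{-\frac{n-4}{2}}\Big(\partial_r^2 g+\tfrac{1}{|x|}\partial_r g\Big).
\]
After the substitution every squared norm in \eqref{e:MOW_left} and \eqref{e:MOW_leftv2} becomes a one-dimensional integral against a monomial weight $r^a\,\mathrm dr$ with $a\in\{-1,1,3\}$, and the identities follow from elementary integrations by parts, the boundary terms vanishing because $g$ is compactly supported in $\R^n\setminus\{0\}$.

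In more detail, expanding the square shows that \eqref{e:MOW_left} is equivalent to the single identity $\langle\Delta_r f,|x|^{-2}f\rangle=-\mathbf R_n\||x|^{-2}f\|_2^2-\||x|^{-1}f_*\|_2^2$, which in polar coordinates reads
\[
\int_0^\infty(\Delta_r u)\,u\,r^{n-3}\,\mathrm dr=-\mathbf R_n\int_0^\infty u^2 r^{n-5}\,\mathrm dr-\int_0^\infty u_*^2 r^{n-3}\,\mathrm dr,\qquad u_*=u'+\tfrac{n-4}{2}\tfrac{u}{r},
\]
and this is two integrations by parts together with $\mathbf R_n=\tfrac{(n-4)^2}{4}+(n-4)$. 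For \eqref{e:MOW_leftv2} I would record the pointwise factorization $\Delta_r f+\mathbf R_n|x|^{-2}f=f_\#+2|x|^{-1}f_*$ (immediate from the definitions, using $\mathbf R_n-\tfrac{(n-4)^2}{4}=n-4$), expand the resulting square, note that the cross term $\langle f_\#,|x|^{-1}f_*\rangle$ vanishes after one integration by parts, and combine the identity thus obtained for $\|\Delta_r f+\mathbf R_n|x|^{-2}f\|_2^2$ with \eqref{e:MOW_left}.

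For \eqref{e:MOW_right} I would use $\Delta=\Delta_r+\sum_jL_j^2$ and expand $\|\Delta f\|_2^2$, so that the claim becomes $\langle\Delta_r f,\sum_jL_j^2f\rangle=\mathbf R_n\sum_j\||x|^{-1}L_jf\|_2^2-\langle\sum_jL_j^2f_*,f_*\rangle$. I would then expand $f=\sum_kf_k(r)Y_k(\omega)$ in spherical harmonics, with $-\Delta_{\mathbb S^{n-1}}Y_k=\lambda_kY_k$. Since $\Delta_r$, $\partial_r$ and multiplication by radial functions send each summand $f_k(r)Y_k(\omega)$ to $(\textrm{radial})\times Y_k$, while $\sum_jL_j^2=|x|^{-2}\Delta_{\mathbb S^{n-1}}$ sends it to $-\lambda_k|x|^{-2}f_k(r)Y_k(\omega)$, and since the relations $L_j^{\,*}=-L_j+(n-1)x_j|x|^{-2}$ and $\sum_jx_jL_jf=0$ give $\sum_j\||x|^{-1}L_jf\|_2^2=\langle|x|^{-4}(-\Delta_{\mathbb S^{n-1}})f,f\rangle$, the displayed identity decouples by orthonormality of the $Y_k$ into one identity for each $k$. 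For $\lambda_k\neq0$ that identity, after division by $\lambda_k$, is exactly the one-dimensional identity of the previous paragraph applied to $v=f_k$, and it is trivial when $\lambda_k=0$; the term-by-term manipulations are legitimate since $f\in C_0^\infty(\R^n\setminus\{0\})$. I expect this reduction of \eqref{e:MOW_right} to be the step demanding the most care: one must handle the operators $L_j$ correctly---in particular establish the adjoint formula above and the relation $\sum_jx_jL_jf=0$, which are what make the mixed radial--spherical terms collapse---and check that the spherical-harmonic decomposition is compatible with all the operators in play. By contrast the radial computations behind \eqref{e:MOW_left} and \eqref{e:MOW_leftv2} are routine once the substitution $g=|x|^{(n-4)/2}f$ is in hand, though tracking the constants still requires some attention.
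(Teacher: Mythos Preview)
Your argument is correct, and for \eqref{e:MOW_left} and \eqref{e:MOW_leftv2} it is close in spirit to the paper's but tidier: the paper appeals to \cite{m-o-w4} for $n\neq 3$ and then gives a separate treatment of $n=3$ (its opening integration-by-parts formula carries a factor $(n-3)^{-1}$), whereas your reduction to the single radial identity $\langle\Delta_r f,|x|^{-2}f\rangle=-\mathbf R_n\||x|^{-2}f\|_2^2-\||x|^{-1}f_*\|_2^2$ is uniform in $n$ (the word ``equivalent'' is only literal when $\mathbf R_n\neq 0$, but the implication you actually use holds for all $n$, and for $n=4$ the identity \eqref{e:MOW_left} is trivial). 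Your pointwise factorisation $\Delta_r f+\mathbf R_n|x|^{-2}f=f_\#+2|x|^{-1}f_*$ together with the vanishing cross term is exactly the content of the paper's auxiliary identity \eqref{equality10} with its specific choice of $a$ and $g$: both routes yield the intermediate relation $4\||x|^{-1}f_*\|_2^2=\|\Delta_r f+\mathbf R_n|x|^{-2}f\|_2^2-\|f_\#\|_2^2$, which one then substitutes into \eqref{e:MOW_left}. For \eqref{e:MOW_right} the approaches genuinely diverge. The paper quotes the expansion from \cite{m-o-w4} (proved by direct integration by parts in $\R^n$) and then rewrites its last term using the commutator relation $L_j\partial_r=(\partial_r+|x|^{-1})L_j$ together with the adjoint formula $L_j^*=-L_j+(n-1)x_j|x|^{-2}$, never invoking spherical harmonics at this stage. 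You instead diagonalise via spherical harmonics and observe that on each eigenmode the cross-term identity is, after dividing by $\mu_k$, precisely the radial identity already established for \eqref{e:MOW_left}. Your route has the appeal of unifying all three identities through a single one-dimensional computation, at the price of the bookkeeping needed to justify the termwise manipulations; the paper's route keeps \eqref{e:MOW_right} self-contained at the operator level and postpones spherical harmonics to Theorem~\ref{t:2nd3rdterms}.
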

The identity \eqref{e:MOW_left} was reported in \cite[Theorem 1.1]{m-o-w4} for $n \geq 5$. Since $\mathbf{R}_4  = 0$, it is immediate that it extends to $n = 4$. Here we shall verify the cases $n = 2$ and $n = 3$; although the argument in \cite{m-o-w4} extends verbatim when $n = 2$, a certain amount of care is needed to check the case $n = 3$. The identity \eqref{e:MOW_leftv2} is new and it has the advantage that for $n = 2,3$, \emph{both} coefficients $-(1 + \frac{1}{2}\mathbf{R}_n)$ and $\mathbf{R}_n$ are negative. An identity similar to \eqref{e:MOW_right} was established in \cite[Theorem 1.2]{m-o-w4} (for $n \geq 5$), the only difference being the final term on the right-hand side of \eqref{e:MOW_right}. As we shall see, the form we have presented in Theorem \ref{t:MOWv2} is more convenient for our purposes in the present paper (when handling terms in a decomposition of $f$ into spherical harmonics), and we also emphasise that $-\sum_{j=1}^n L_j^2$ is a non-negative operator on $L^2(\mathbb{R}^n)$ (see the forthcoming discussion on spherical harmonics).

Regarding the second and third terms on the right-hand side of \eqref{e:MOW_right}, we have the identities in the forthcoming Theorem \ref{t:2nd3rdterms} in terms of the decomposition of $f$ into spherical harmonics. The statement is written is terms of the orthogonal projection operator $P_k$ onto the closed subspace $\mathcal{H}_k(\R^n)$ of $L^2(\R^n)$ spanned by spherical harmonics of 
order $k$ multiplied by radial functions. When $k = 0$, we will often abbreviate $P_0$ as $P$, and we note this operator is explicitly given by
\[
Pf(x) = \frac{1}{\sigma(\mathbb{S}^{n-1})} \int_{\mathbb{S}^{n-1}} f(|x|\omega) \, \mathrm{d}\sigma(\omega). 
\]
The spaces $\mathcal{H}_k(\R^n)$ give rise the standard spherical harmonic decomposition as
\begin{align*}
L^2(\R^n)=\bigoplus_{k=0}^{\infty}\mathcal{H}_k(\R^n)
\end{align*}
and the spherical harmonics of degree $k$ are well 
known to be eigenfunctions of the Laplace--Beltrami operator with eigenvalue $-\mu_k$, where 
\[
\mu_k := k(k+n-2);
\] 
that is
\begin{align} \label{e:eigen}
\Delta_{\mathbb{S}^{n-1}}P_k=-\mu_kP_k
\end{align}
holds for each $k \geq 0$. 
\begin{theorem} \label{t:2nd3rdterms} 
For $n \geq 2$ and $f \in C_0^{\infty}(\R^n\backslash\{0\})$, we have
\begin{align}\label{e:2nd3rdterms}
\begin{aligned}
\bigg\|\sum_{j=1}^n L_j^2 f \bigg\|_{2}^2=\sum_{k = 1}^\infty\mu_k^2\bigg\| \frac{P_k f}{|x|^2} \bigg\|_{2}^2, 
\qquad \sum_{j=1}^n  \bigg\|\frac{L_j f}{|x|} \bigg\|_{2}^2  =\sum_{k = 1}^\infty\mu_k\bigg\| \frac{P_k f}{|x|^2} \bigg\|_{2}^2.
\end{aligned}
\end{align}
\end{theorem}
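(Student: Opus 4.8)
The plan is to reduce both identities in \eqref{e:2nd3rdterms} to the spherical harmonic expansion of $f$, using only \eqref{e:sumL_j2}, the eigenvalue relation \eqref{e:eigen}, and one orthogonality fact. Write $f=\sum_{k\ge0}P_kf$; this converges in $L^2(\R^n)$ and, because $f\in C_0^\infty(\R^n\backslash\{0\})$ is smooth and supported in a compact annulus $\{a\le|x|\le b\}$ with $0<a<b$, it converges together with all its derivatives uniformly on that annulus, which is what will legitimise exchanging the infinite sum with the operators $\Delta_{\mathbb{S}^{n-1}}$ and $L_j$ and with the integrations below. The orthogonality fact is that multiplication by a radial function preserves each $\mathcal{H}_k(\R^n)$, so in particular $|x|^{-2}P_kf\in\mathcal{H}_k(\R^n)$ and hence the functions $\{|x|^{-2}P_kf\}_{k\ge0}$ are pairwise orthogonal in $L^2(\R^n)$. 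Combining \eqref{e:sumL_j2}, \eqref{e:eigen} and $\mu_0=0$ then gives, with convergence in $L^2(\R^n)$,
\[
\sum_{j=1}^nL_j^2f=\frac{1}{|x|^2}\Delta_{\mathbb{S}^{n-1}}f=-\sum_{k\ge1}\mu_k\frac{P_kf}{|x|^2}.
\]

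For the first identity I would take $L^2$-norms in this display and apply the Pythagorean theorem via the orthogonality just recorded, which gives $\|\sum_jL_j^2f\|_2^2=\sum_{k\ge1}\mu_k^2\|P_kf/|x|^2\|_2^2$ at once. For the second identity the main step is the integration-by-parts formula
\[
\sum_{j=1}^n\bigg\|\frac{L_jf}{|x|}\bigg\|_2^2=\bigg\langle-\sum_{j=1}^nL_j^2f,\ \frac{f}{|x|^2}\bigg\rangle .
\]
To establish it, I would compute the formal adjoint of $L_j$ on $L^2(\R^n)$, namely $L_j^\ast=-L_j+(n-1)\tfrac{x_j}{|x|^2}$, and use two elementary pointwise facts: that $L_j$ annihilates radial functions (so $\sum_jL_j(|x|^{-2}L_jf)=|x|^{-2}\sum_jL_j^2f$), and that $\sum_jx_jL_jf=x\cdot\nabla f-|x|\,\partial_rf=0$; inserting $L_j^\ast$ into $\sum_j\langle L_jf,|x|^{-2}L_jf\rangle$ and collecting terms then yields the formula. (Alternatively one can pass to polar coordinates and use the pointwise identity $\sum_j|L_jf|^2=|\nabla f|^2-|\partial_rf|^2$ together with the intrinsic integration by parts on $\mathbb{S}^{n-1}$.) It then only remains to substitute the formula for $-\sum_jL_j^2f$ from the first paragraph into the right-hand side, expand $f/|x|^2=\sum_{k\ge0}P_kf/|x|^2$, and use the orthogonality of the $|x|^{-2}P_kf$ once more to obtain $\sum_j\|L_jf/|x|\|_2^2=\sum_{k\ge1}\mu_k\|P_kf/|x|^2\|_2^2$.

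The algebraic inputs above — the adjoint $L_j^\ast$, the vanishing of $\sum_jx_jL_jf$ and of $L_j$ on radial functions, and the pointwise formula for $\sum_j|L_jf|^2$ — are all short direct computations. The only part I expect to require genuine care is the bookkeeping: one must justify interchanging the infinite spherical-harmonic expansion with $\Delta_{\mathbb{S}^{n-1}}$, with the $L_j$, with the integrations over $\R^n$, and with the passage to the $L^2$-limit, and this is exactly where smoothness and compact support away from the origin enter, the expansion and all its term-by-term derivatives converging uniformly on the annulus containing the support of $f$. (One may also assume $f$ real-valued from the outset, since $L_j$ has real coefficients and $P_k$ commutes with complex conjugation, so each side of \eqref{e:2nd3rdterms} is additive over the real and imaginary parts of $f$; this removes any concern about conjugates in the computations.)
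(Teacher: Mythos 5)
Your proof is correct and takes essentially the same route as the paper: both rely on \eqref{e:sumL_j2} and the eigenvalue relation \eqref{e:eigen} to reduce the first identity to orthogonality of the spherical-harmonic components, and both establish the second identity via the same integration-by-parts lemma $\sum_j\|L_jf/|x|\|_2^2=-\langle f,|x|^{-2}\sum_jL_j^2f\rangle$ using the formal adjoint of $L_j$ (i.e.\ \eqref{e:Ljparts}), the commutation of $L_j$ with radial multipliers, and $\sum_jx_jL_j=0$. The only presentational difference is that the paper writes out the orthogonality after passing to polar coordinates, while you invoke it directly for the family $\{|x|^{-2}P_kf\}_{k\ge0}$ in $L^2(\R^n)$; this is an equivalent bookkeeping choice, not a different method.
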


As we shall see, Theorems \ref{t:MOWv2} and \ref{t:2nd3rdterms} lead to a number of interesting consequences, some of which are new, and some of which provide new and simpler perspectives on known results.

Firstly, we consider a refinement of the Rellich inequality \eqref{Rellich1} in which the Laplacian is replaced by the radial Laplacian. This naturally leads us to also consider a comparison of the size of the norm of the radial Laplacian and the Laplacian. We summarize matters as follows.
\begin{theorem}\label{t:radial}
Let $n \geq 2$. Then
\begin{align} \label{e:radialRellich}
\mathbf{R}_{n}^2 \bigg\|\frac{f}{|x|^2}\bigg\|_{2}^2 \leq \|\Delta_r f\|_2^2
\end{align}
holds for all $f \in C^\infty_0(\mathbb{R}^n \setminus \{0\})$. If $n \geq 3$, then
\begin{equation} \label{e:radialcomparison}
\|\Delta_r f\|_2\le C\|\Delta f\|_2
\end{equation}
holds for all $f \in C^\infty_0(\mathbb{R}^n \setminus \{0\})$ with $C = 1$, and fails for any constant $C$ when $n = 2$. Furthermore, the constants in \eqref{e:radialRellich} and \eqref{e:radialcomparison} are best possible.
\end{theorem}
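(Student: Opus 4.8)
The plan is to deduce everything from the identities in Theorem \ref{t:MOWv2}, together with the spherical harmonic decomposition. For the radial Rellich inequality \eqref{e:radialRellich}, I would use \eqref{e:MOW_left}: since $\mathbf{R}_n \geq 0$ precisely when $n \geq 4$, in those dimensions the two subtracted terms on the right-hand side are nonnegative and \eqref{e:radialRellich} follows immediately. For $n = 2, 3$ I would instead invoke \eqref{e:MOW_leftv2}, where both coefficients $-(1+\tfrac12\mathbf{R}_n)$ and $\mathbf{R}_n$ are negative (as the excerpt emphasises), so again $\|\Delta_r f\|_2^2 \geq \mathbf{R}_n^2 \|f/|x|^2\|_2^2$. (When $n = 4$ the inequality is trivial as $\mathbf{R}_4 = 0$.) This simultaneously shows \eqref{e:radialRellich} and puts us in position to read off sharpness.

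For the comparison \eqref{e:radialcomparison} with $C = 1$ when $n \geq 3$, I would use \eqref{e:MOW_right}. The first term on the right is exactly $\|\Delta_r f\|_2^2$, the second and third terms are manifestly nonnegative, and the final term $2\langle -\sum_j L_j^2 f_*, f_*\rangle$ is nonnegative because $-\sum_j L_j^2$ is a nonnegative operator on $L^2(\mathbb{R}^n)$ (this is where the choice of the form of the identity in Theorem \ref{t:MOWv2} over the one in \cite{m-o-w4} pays off). Hence $\|\Delta f\|_2^2 \geq \|\Delta_r f\|_2^2$, giving \eqref{e:radialcomparison} with $C = 1$.

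For sharpness of the constants, I would argue by testing on functions that are asymptotically radial or genuinely radial. For \eqref{e:radialcomparison}, taking $f = Pf$ radial kills the $L_j$ terms and the final inner product in \eqref{e:MOW_right}, giving equality $\|\Delta f\|_2 = \|\Delta_r f\|_2$, so $C = 1$ is optimal (indeed attained in the limit along any radial sequence). For \eqref{e:radialRellich}, I would choose radial $f$ concentrating near the extremiser $|x|^{-n/2}$ of the classical Rellich inequality: on radial functions $\Delta f = \Delta_r f$, and one knows the classical Rellich constant $\mathbf{R}_n^2$ is sharp and approached by suitably truncated/regularised powers $|x|^{-\alpha}$ with $\alpha \to n/2$ (this is standard; see the references in the excerpt). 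Thus the same sequence shows $\|\Delta_r f\|_2^2 / \|f/|x|^2\|_2^2 \to \mathbf{R}_n^2$.

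Finally, for the failure of \eqref{e:radialcomparison} when $n = 2$: here $\mathbf{R}_2 = -1 < 0$, so \eqref{e:MOW_right} shows that on a nonradial test function the last three terms on the right can conspire to make $\|\Delta f\|_2^2$ \emph{smaller} than $\|\Delta_r f\|_2^2$. Concretely, I would take $f$ in a single spherical harmonic sector $\mathcal{H}_k$ with $k \geq 1$ (so that $Pf = 0$ and $f = f_* $-type terms survive) and compute both sides via Theorem \ref{t:2nd3rdterms}: on $\mathcal{H}_k$, $\|\Delta f\|_2^2$ and $\|\Delta_r f\|_2^2$ reduce to one-dimensional radial integrals whose ratio can be made unbounded by concentrating the radial profile — for instance near $|x| \to 0$ or $|x| \to \infty$ where the angular term $\mu_k |x|^{-2}$ is negligible or dominant relative to the radial one, exploiting that in two dimensions there is no classical Rellich inequality for such $f$ (cf.\ the discussion around \eqref{e:P1=0}). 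I expect this last point — exhibiting the explicit blow-up sequence for $n = 2$ and verifying the relevant one-dimensional estimates — to be the main technical obstacle; everything else is a direct consequence of the identities already in hand.
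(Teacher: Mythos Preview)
Your argument has a genuine gap in the case $n=3$ of \eqref{e:radialcomparison}. You claim that in \eqref{e:MOW_right} ``the second and third terms are manifestly nonnegative'', but the third term is $2\mathbf{R}_n\sum_j\|L_jf/|x|\|_2^2$, and $\mathbf{R}_3=-3/4<0$, so this term is \emph{negative} in general. Your argument therefore only proves \eqref{e:radialcomparison} with $C=1$ for $n\ge4$. The paper repairs exactly this point by combining the second and third terms via Theorem~\ref{t:2nd3rdterms}:
\[
\bigg\|\sum_{j=1}^3 L_j^2 f\bigg\|_2^2+2\mathbf{R}_3\sum_{j=1}^3\bigg\|\frac{L_jf}{|x|}\bigg\|_2^2
=\sum_{k\ge1}\mu_k\Big(\mu_k-\tfrac32\Big)\bigg\|\frac{P_kf}{|x|^2}\bigg\|_2^2\ge0,
\]
using $\mu_k=k(k+1)\ge2$ for $k\ge1$. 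With this, and the nonnegativity of the final inner product in \eqref{e:MOW_right}, the $n=3$ case goes through.

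For the failure of \eqref{e:radialcomparison} when $n=2$, your proposed construction is unnecessary (and, as you anticipate, would be the delicate part). The paper instead argues by contradiction in one line: if \eqref{e:radialcomparison} held with some finite $C$, then combining it with \eqref{e:radialRellich} (note $\mathbf{R}_2^2=1>0$) would yield a Rellich inequality $\|f/|x|^2\|_2^2\le C^2\|\Delta f\|_2^2$ valid for all $f\in C_0^\infty(\mathbb{R}^2\setminus\{0\})$, which is known to fail (cf.\ the discussion around \eqref{e:P1=0}). This removes what you flagged as the main technical obstacle entirely. The remaining parts of your plan---\eqref{e:radialRellich} via \eqref{e:MOW_left}/\eqref{e:MOW_leftv2}, and sharpness via radial test functions---match the paper's approach.
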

The inequality \eqref{e:radialRellich} can be found already in work of Evans--Lewis \cite[Eq. (2.13)]{e-l}. We remark that it is possible to quickly derive \eqref{e:radialRellich} from \eqref{e:MOW_left} (for $n \geq 4$) and \eqref{e:MOW_leftv2} (for $n = 2,3$). The inequality \eqref{e:radialcomparison} with $C = 1$ follows immediately from \eqref{e:MOW_right} for $n \geq 4$ since $\mathbf{R}_n \geq 0$. When $n = 2$, \eqref{e:radialcomparison} must fail for any $C < \infty$ since otherwise we could combine with \eqref{e:radialRellich} and obtain an inequality of the form \eqref{Rellich1} with some positive constant on the left-hand side; as we have already pointed out, this is impossible. The case $n = 3$ is slightly more subtle since $\mathbf{R}_3 < 0$, but we may use Theorem \ref{t:2nd3rdterms} to see that
\begin{align*}
\begin{aligned}
 \bigg\|\sum_{j=1}^3 L_j^2 f \bigg\|_{2}^2 + 2\mathbf{R}_3 \sum_{j=1}^3  \bigg\|\frac{L_j f}{|x|} \bigg\|_{2}^2  
=\sum_{k = 1}^\infty \big(\mu_k( \mu_k-\tfrac{3}{2})\big) \bigg\| \frac{P_k f}{|x|^2} \bigg\|_{2}^2 \geq 0,
\end{aligned}
\end{align*}
where we have used that $\mu_k = k(k+1) \geq 2$ for $k \geq 1$.

For completeness we also note that the optimality of the constant in \eqref{e:radialRellich} can be shown by arguing as in \cite{Cazacu} using the family of functions $f_\varepsilon(x) = |x|^{\frac{4-n}{2} + \frac{\varepsilon}{2}}\chi(|x|)$. Here $\chi$ is infinitely smooth, $\chi(0) = 1$ and $\chi(r)$ vanishes for $r \geq 1$. Then, as $\varepsilon \to 0$, we have $\varepsilon \| \Delta_r f\|_2^2 \to \mathbf{R}_n^2$ and $\varepsilon \|\frac{f}{|x|^2}\|_{2}^2 \to 1$. Also, the constant $C = 1$ in \eqref{e:radialcomparison} clearly cannot be improved since equality obviously holds for radial $f$.

Next we present a direct analogue of Theorem \ref{t:radial} for spherical derivatives.
\begin{theorem}\label{t:spherical}
Let $n \geq 2$. Then
\begin{align} \label{e:sphericalRellich}
(n-1)^2\bigg\| \frac{f}{|x|^2} \bigg\|_2^2 \leq \bigg\| \sum_{j=1}^n L_j^2  f \bigg\|_2^2
\end{align}
holds for all $f \in C^\infty_0(\mathbb{R}^n \setminus \{0\})$ such that $Pf = 0$. If $n \geq 3$, then
\begin{equation} \label{e:sphericalcomparison}
\bigg\| \sum_{j=1}^n L_j^2f \bigg\|_{2}^2 \leq \mathbf{C}_n \|\Delta f\|_{2}^2
\end{equation}
holds for all $f \in C^\infty_0(\mathbb{R}^n \setminus \{0\})$, with $\mathbf{C}_n = 1$ when $n \geq 4$, with $\mathbf{C}_3 = \frac{64}{25}$ when $n = 3$, and the inequality fails for any $\mathbf{C}_2 < \infty$ when $n = 2$. Furthermore, the constants in \eqref{e:sphericalRellich} and \eqref{e:sphericalcomparison} are best possible.
\end{theorem}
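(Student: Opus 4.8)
The plan is to pass to the spherical harmonic decomposition, which diagonalises both sides of each inequality, reducing everything to a family of radial Rellich-type inequalities indexed by the degree $k$; each of these is then governed by the radial inequality \eqref{e:radialRellich} after a shift of dimension $n\mapsto n+2k$. First I would record the reduction: since $\Delta$, $\Delta_r$, the operator $\sum_{j=1}^n L_j^2$, and multiplication by any power of $|x|$ all map $\mathcal{H}_k(\R^n)$ into itself, the projection $P_k$ commutes with each of these and all the $L^2$-norms in Theorem \ref{t:spherical} split as orthogonal sums over $k\ge0$; in particular $\|f/|x|^2\|_2^2=\sum_{k\ge0}\|P_kf/|x|^2\|_2^2$ and $\|\Delta f\|_2^2=\sum_{k\ge0}\|\Delta P_kf\|_2^2$, while Theorem \ref{t:2nd3rdterms} gives $\|\sum_{j=1}^nL_j^2f\|_2^2=\sum_{k\ge1}\mu_k^2\|P_kf/|x|^2\|_2^2$. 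The inequality \eqref{e:sphericalRellich} is then immediate: when $Pf=0$ the $k=0$ term drops from $\|f/|x|^2\|_2^2$, and since $\mu_k=k(k+n-2)\ge\mu_1=n-1$ for every $k\ge1$ we get $\|\sum_{j=1}^nL_j^2f\|_2^2\ge(n-1)^2\sum_{k\ge1}\|P_kf/|x|^2\|_2^2=(n-1)^2\|f/|x|^2\|_2^2$. Equality holds for every nonzero $f\in\mathcal{H}_1(\R^n)$, so $(n-1)^2$ is best possible.

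Next I would establish the per-sector version of \eqref{e:sphericalcomparison}: by the splitting above it suffices to prove $\mu_k^2\|g/|x|^2\|_2^2\le\mathbf{C}_n\|\Delta g\|_2^2$ for each $k\ge1$ and each nonzero $g\in\mathcal{H}_k(\R^n)\cap C_0^\infty(\R^n\setminus\{0\})$. Writing $g(x)=h(|x|)Y(x/|x|)$ with $Y$ a spherical harmonic of degree $k$ and $h\in C_0^\infty((0,\infty))$, and substituting $h(r)=r^k\phi(r)$, the relations $\Delta=\Delta_r+|x|^{-2}\Delta_{\mathbb{S}^{n-1}}$, $\Delta_{\mathbb{S}^{n-1}}Y=-\mu_kY$ and $\mu_k=k(k+n-2)$ force the $r^{-2}$-coefficient to cancel, so that $\Delta g$ equals $Y(x/|x|)$ times $r^k$ times the radial Laplacian in dimension $N:=n+2k$ applied to $\phi$. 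Using $\mathrm{d}x=r^{n-1}\,\mathrm{d}r\,\mathrm{d}\sigma(\omega)$ and $r^{2k}\cdot r^{n-1}=r^{N-1}$, the quotient $\|\Delta g\|_2^2/\|g/|x|^2\|_2^2$ coincides with the analogous quotient for the radial function on $\R^N$ with profile $\phi$; hence, by \eqref{e:radialRellich} in dimension $N$, $\|\Delta g\|_2^2\ge\mathbf{R}_N^2\|g/|x|^2\|_2^2$, where $\mathbf{R}_N>0$ unless $N=4$, i.e. unless $(n,k)=(2,1)$. Away from this exceptional case this gives $\mu_k^2\|g/|x|^2\|_2^2\le(\mu_k/\mathbf{R}_N)^2\|\Delta g\|_2^2$.

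It then remains to compute $\sup_{k\ge1}(\mu_k/\mathbf{R}_{n+2k})^2$ and to check sharpness. A short calculation yields the identity $\mathbf{R}_{n+2k}=\mu_k+\mathbf{R}_n$. For $n\ge4$ we therefore have $0<\mu_k\le\mathbf{R}_{n+2k}$ for every $k\ge1$, with equality throughout exactly when $n=4$; so $\mathbf{C}_n=1$, attained for $n=4$ and approached as $k\to\infty$ for $n\ge5$. For $n=3$, $\mathbf{R}_3=-\tfrac34$, so $\mu_k/\mathbf{R}_{3+2k}=\mu_k/(\mu_k-\tfrac34)=1+3/(4\mu_k-3)$, which decreases as $\mu_k=k(k+1)$ grows and is maximal at $k=1$, where it equals $\tfrac85$; hence $\mathbf{C}_3=(\tfrac85)^2=\tfrac{64}{25}$. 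For $n=2$, $\mathbf{R}_4=\mu_1+\mathbf{R}_2=1-1=0$, so no bound $\|\Delta g\|_2^2\ge c\|g/|x|^2\|_2^2$ with $c>0$ holds on $\mathcal{H}_1(\R^2)$ and \eqref{e:sphericalcomparison} fails for every finite $\mathbf{C}_2$. For sharpness of $\mathbf{C}_n$ I would take $f_\varepsilon(x)=|x|^{\frac{4-n}{2}+\frac{\varepsilon}{2}}\chi(|x|)\,Y(x/|x|)$ with $Y$ of degree $k$, as in the remark following Theorem \ref{t:radial} (suitably mollified near the origin): since the effective dimension of the degree-$k$ sector is $N=n+2k$, the computation there gives $\varepsilon\|f_\varepsilon/|x|^2\|_2^2\to c$, $\varepsilon\|\Delta f_\varepsilon\|_2^2\to\mathbf{R}_N^2c$ and $\varepsilon\|\sum_{j=1}^nL_j^2f_\varepsilon\|_2^2\to\mu_k^2c$ as $\varepsilon\to0$ for some $c>0$, so the ratio in \eqref{e:sphericalcomparison} tends to $(\mu_k/\mathbf{R}_N)^2$; choosing $k=1$ for $n=3$, any $k$ for $n=4$, and letting $k\to\infty$ for $n\ge5$ realises the supremum, while $k=1$ makes it diverge when $n=2$.

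The genuinely delicate case is $n=3$: because $\mathbf{R}_3<0$ the $k=1$ sector cannot be discarded by a sign argument, and the sharp constant $\tfrac{64}{25}$ is dictated by that single sector. The crux is thus the exact identification of the degree-$k$ problem with the radial Rellich inequality in dimension $n+2k$, combined with the elementary observation that $k\mapsto\mu_k/\mathbf{R}_{n+2k}$ attains its maximum at $k=1$ rather than only in a limit; the remainder is routine bookkeeping with the orthogonal decomposition and the standard extremising family.
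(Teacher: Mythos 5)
Your proof is correct, and the key step --- reducing the degree-$k$ sector to the radial Rellich inequality in effective dimension $N=n+2k$ via the substitution $h(r)=r^k\phi(r)$, and then using the identity $\mathbf{R}_{n+2k}=\mathbf{R}_n+\mu_k$ to turn the determination of $\mathbf{C}_n$ into an elementary maximisation of $\mu_k/(\mathbf{R}_n+\mu_k)$ over $k\ge1$ --- is a genuinely different route from the paper's. The paper instead starts from the identity \eqref{e:MOW_right}, rewrites the spherical terms via Theorem \ref{t:2nd3rdterms}, absorbs the sectors $k\ge2$ by the sign observation $\tfrac32\mu_k\le(1-\tfrac{1}{\mathbf{C}_3})\mu_k^2$, and then reduces the surviving $k=1$ sector to the one-dimensional inequality $\int_0^\infty|g''|^2r^2\,\mathrm{d}r+6\int_0^\infty|g'|^2\,\mathrm{d}r\ge\tfrac{25}{16}\int_0^\infty|g|^2r^{-2}\,\mathrm{d}r$, whose sharpness is settled via $g(r)=r^{\sqrt3}h(r)$ and \eqref{e:1DRellich}. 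That one-dimensional inequality is in fact precisely the radial Rellich inequality on $\mathbb{R}^5$ in disguise, so the two proofs ultimately rest on the same sharp fact; you simply reach it by a cleaner and more uniform argument. What your route buys is that the effective-dimension trick handles every $(n,k)$ at once, makes the $n=2$ failure completely transparent (namely $(n,k)=(2,1)$ gives $N=4$ where $\mathbf{R}_4=0$), produces the extremising family directly, and yields as a by-product the per-sector refinement $\|\Delta P_kf\|_2^2\ge(\mathbf{R}_n+\mu_k)^2\|P_kf/|x|^2\|_2^2$, which sharpens \eqref{e:orthogonalR} for $k\ge2$. What the paper's route buys is integration with the identity machinery of Theorems \ref{t:MOWv2} and \ref{t:2nd3rdterms}, which is the organising theme of the article. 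One small bookkeeping point to make explicit: $P_kf$ itself decomposes over an orthonormal basis $\{Y_{k,m}\}$ of degree-$k$ harmonics, so your per-sector estimate should be applied term-by-term in $m$ before summing --- routine, but worth a sentence.
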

The inequality \eqref{e:sphericalRellich} follows immediately from Theorem \ref{t:2nd3rdterms} since we have
\[
\bigg\| \frac{f}{|x|^2}\bigg\|_2^2 = \sum_{k = 1}^\infty \bigg\| \frac{P_k f}{|x|^2} \bigg\|_{2}^2
\]
whenever $Pf = 0$, and $\mu_k \geq n - 1$ for $k \geq 1$. It is also clear from this argument that the constant is best possible since equality holds in \eqref{e:sphericalRellich} for functions in $\mathcal{H}_1(\mathbb{R}^n)$.

The inequality in \eqref{e:sphericalcomparison} was considered by Evans--Lewis in \cite[Corollary 1]{e-l}\footnote{In fact, an abstract version of \eqref{e:sphericalcomparison} was considered in \cite{e-l}, and we shall identify the best constants in such a setting in Section \ref{section:abstract}.}. An examination of their proof yields the explicit constants $\mathbf{C}_n = 1$ when $n \geq 4$, and $\mathbf{C}_3 = 4$ (see Section \ref{section:alternative} for details of the three-dimensional case), but there appears to be no discussion on the best constant in \cite{e-l}. Note that the fact that \eqref{e:sphericalcomparison} holds with  $\mathbf{C}_n = 1$ when $n \geq 4$ follows immediately from \eqref{e:MOW_right}. The case $n = 3$ is much more subtle and we shall see that it is possible to obtain the best constant by making use of \eqref{e:MOW_right} and \eqref{e:2nd3rdterms}.

Finally, in a slightly different direction, we present a refinement of the Rellich inequality \eqref{Rellich1} in which an additional term is added to the left-hand side. For the statement, we introduce the constant 
\[
\widetilde{\mathbf{R}}_{n}  = (n-1)(n-1 + 2\mathbf{R}_{n})
\] 
and write $P^{\perp}=I-P$ for the projection operator onto the orthogonal complement of $\mathcal{H}_0(\mathbb{R}^n)$ (closed subspace of $L^2(\mathbb{R}^n)$ consisting of radially symmetric functions).
\begin{theorem} \label{t:LHSbiggerRellich}
Let $n \geq 2$. Then
\begin{equation} \label{e:remainderR}
\mathbf{R}_{n}^2 \bigg\|\frac{f}{|x|^2}\bigg\|_{2}^2 + \widetilde{\mathbf{R}}_{n}  \bigg\|\frac{P^\perp f}{|x|^2}\bigg\|_{2}^2 \leq \|\Delta f\|_{2}^2
\end{equation}
holds for all $f \in C^\infty_0(\mathbb{R}^n \setminus \{0\})$.
\end{theorem}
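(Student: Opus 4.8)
The plan is to deduce \eqref{e:remainderR} from the identity \eqref{e:MOW_right}, the spherical harmonic expansions of Theorem~\ref{t:2nd3rdterms}, and the radial Rellich inequality \eqref{e:radialRellich}, after which the whole matter reduces to an elementary inequality among the eigenvalues $\mu_k$.

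First I would discard the last term in \eqref{e:MOW_right}: since $-\sum_{j=1}^n L_j^2 = -\frac{1}{|x|^2}\Delta_{\mathbb{S}^{n-1}}$ is a non-negative operator on $L^2(\mathbb{R}^n)$, we have $\big\langle -\sum_{j=1}^n L_j^2 f_*, f_*\big\rangle \geq 0$, so
\[
\|\Delta f\|_2^2 \geq \|\Delta_r f\|_2^2 + \bigg\|\sum_{j=1}^n L_j^2 f\bigg\|_2^2 + 2\mathbf{R}_n \sum_{j=1}^n \bigg\|\frac{L_j f}{|x|}\bigg\|_2^2.
\]
Then I would substitute the two identities of Theorem~\ref{t:2nd3rdterms} for the second and third terms, bound $\|\Delta_r f\|_2^2 \geq \mathbf{R}_n^2\|f/|x|^2\|_2^2$ by \eqref{e:radialRellich}, and use that $P_k$ commutes with multiplication by $|x|^{-2}$ together with $\|f/|x|^2\|_2^2 = \sum_{k\geq 0}\|P_kf/|x|^2\|_2^2$. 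This gives
\[
\|\Delta f\|_2^2 \geq \mathbf{R}_n^2\bigg\|\frac{f}{|x|^2}\bigg\|_2^2 + \sum_{k\geq 1}\big(\mu_k^2 + 2\mathbf{R}_n\mu_k\big)\bigg\|\frac{P_kf}{|x|^2}\bigg\|_2^2.
\]

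Since $\|P^\perp f/|x|^2\|_2^2 = \sum_{k\geq 1}\|P_kf/|x|^2\|_2^2$, it then remains to verify the scalar inequality $\mu_k^2 + 2\mathbf{R}_n\mu_k \geq \widetilde{\mathbf{R}}_n$ for all $k \geq 1$. As $\mu_1 = n-1$ we have $\widetilde{\mathbf{R}}_n = \mu_1^2 + 2\mathbf{R}_n\mu_1$, so this amounts to saying that $t \mapsto t^2 + 2\mathbf{R}_n t$ is non-decreasing on $[\mu_1,\infty)$, i.e. $\mu_1 + \mathbf{R}_n \geq 0$; and $\mu_1 + \mathbf{R}_n = (n-1) + \tfrac{n(n-4)}{4} = \tfrac{(n-2)(n+2)}{4} \geq 0$ for all $n \geq 2$. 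Because $(\mu_k)_{k\geq 1}$ is increasing, $\mu_k^2 + 2\mathbf{R}_n\mu_k \geq \mu_1^2 + 2\mathbf{R}_n\mu_1 = \widetilde{\mathbf{R}}_n$, and multiplying by the non-negative weights $\|P_kf/|x|^2\|_2^2$ and summing over $k \geq 1$ finishes the proof.

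There is no essential difficulty here; the only thing to watch is that when $n = 2,3$ the constant $\mathbf{R}_n$ is negative (and when $n = 2$ so is $\widetilde{\mathbf{R}}_n$), so one must keep track of signs in the monotonicity step and in the final summation. What makes the argument robust across all $n \geq 2$ is precisely that the eigenvalue inequality $\mu_k^2 + 2\mathbf{R}_n\mu_k \geq \widetilde{\mathbf{R}}_n$ holds term by term, which is immediate once the harmless computation $\mu_1 + \mathbf{R}_n = \tfrac{(n-2)(n+2)}{4} \geq 0$ is in hand.
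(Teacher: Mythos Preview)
Your proof is correct and follows essentially the same route as the paper's: both combine the identity \eqref{e:MOW_right}, the spherical-harmonic expansions in Theorem~\ref{t:2nd3rdterms}, and the radial Rellich bound, and both reduce to the same scalar inequality $\mu_k(\mu_k+2\mathbf{R}_n)\ge \widetilde{\mathbf{R}}_n$ for $k\ge 1$. The only cosmetic difference is that you quote \eqref{e:radialRellich} as a ready-made inequality, while the paper plugs in the underlying identities \eqref{e:MOW_left} (for $n\ge 4$) and \eqref{e:MOW_leftv2} (for $n=2,3$) directly to obtain an exact equality \eqref{mix-equal} before dropping non-negative terms; since \eqref{e:radialRellich} was already derived from those identities, the two arguments coincide.
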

Since $\widetilde{\mathbf{R}}_{n} \geq 0$ for $n \geq 3$ we immediately recover the classical Rellich inequality \eqref{Rellich1} from \eqref{e:remainderR}. If we instead apply \eqref{e:remainderR} for input functions satisfying $Pf = 0$ (i.e. those $f$ which are orthogonal to the subspace of radially symmetric functions), then we obtain 
\begin{equation} \label{e:orthogonalR}
(\mathbf{R}_{n} + n-1)^2\bigg\|\frac{f}{|x|^2}\bigg\|_{2}^2 \leq \|\Delta f\|_{2}^2\qquad\text{for}\quad Pf=0, 
\end{equation}
and thus a quantitative improvement in the size of the constant for $n\ge 3$. The improved form of the Rellich inequality \eqref{e:orthogonalR} subject to the constraint $Pf = 0$ is not new and follows from the abstract result in \cite[Theorem 1]{e-l} (see also work by Caldiroli--Musina \cite{CM}); here we give a different perspective and obtain \eqref{e:orthogonalR} from Theorems \ref{t:MOWv2} and \ref{t:2nd3rdterms}, and use this perspective to give a relatively simple proof that there are no (non-trivial) maximizers for the inequality (see Section \ref{section:nonexistence}). The fact that the inequality \eqref{e:orthogonalR} extends to the Sobolev space $H^2(\mathbb{R}^n)$ for $n \geq 3$ (with $Pf = 0$) requires an additional density argument and since we were unable to find this anywhere, we have included details in Section \ref{section:density} of the present paper. We also remark that, conversely, \eqref{e:remainderR} can be derived from \eqref{Rellich1} and \eqref{e:orthogonalR}\footnote{Indeed, by orthogonality and applications of \eqref{Rellich1} and \eqref{e:orthogonalR} we have
\begin{align*}
\mathbf{R}_n^2\left\|\frac{f}{|\cdot|^2}\right\|^2
+\widetilde{\mathbf{R}}_n\left\|\frac{P^{\perp}f}{|\cdot|^2}\right\|^2 =\mathbf{R}_n^2\left\|\frac{Pf}{|\cdot|^2}\right\|^2+(\mathbf{R}_n + n-1)^2
\left\|\frac{P^{\perp}f}{|\cdot|^2}\right\|^2 \le\|\Delta Pf\|^2+\|\Delta P^{\perp}f\|^2=\|\Delta f\|^2.
\end{align*}
In the last step we used the fact that $P$ commutes with $\Delta$. The observation here was pointed out to us on an earlier version of the present paper.}.

As an application which ties things discussed in the present paper together, we shall see that it is possible to give another proof of \eqref{e:sphericalcomparison} with the best constant in three dimensions by using \eqref{e:orthogonalR} (see Section \ref{section:alternative}). 

The observation that the Hardy inequality on $L^2(\mathbb{R}^n)$ may be improved by constraining the class of input functions to $Pf = 0$ may be found in work of Birman--Laptev \cite{BL} and Ekholm--Frank \cite{e-f} in the context of the spectral theory of Schr\"odinger operators (see also \cite{b-m-o, DLP}). The current paper may be viewed as a natural continuation of \cite{b-m-o} in the context of the Rellich inequality, not only in terms of the result but also in terms of our approach.

\begin{Organization}
In Section \ref{section:identities}, we prove the identities in Theorems \ref{t:MOWv2} and \ref{t:2nd3rdterms}. The claims in Theorem \ref{t:radial} have already been justified above (in the discussion after the statement of Theorem \ref{t:radial}). In Section \ref{section:inequalities} we prove Theorems \ref{t:spherical} and \ref{t:LHSbiggerRellich}. 

In Section \ref{section:final}, we provide various further results. In Section \ref{section:alternative} we give an alternative proof of \eqref{e:sphericalcomparison} in three dimensions via \eqref{e:orthogonalR} and using ideas from \cite{e-l}. This argument extends to the abstract setting considered in \cite[Corollary 1]{e-l} and we prove a generalization of Theorem \ref{t:spherical} in Section \ref{section:abstract}. In Section \ref{section:density} we give details of a density argument which extends \eqref{e:orthogonalR} to $L^2$-based Sobolev spaces for three dimensions and higher, and finally in Section \ref{section:nonexistence} we also show how the approach taken in the current paper facilitates a straightforward route to establishing the lack of non-trivial cases of equality in \eqref{e:orthogonalR}. 
\end{Organization}

\section{Proofs of Theorems \ref{t:MOWv2} and \ref{t:2nd3rdterms}} \label{section:identities}

We write
\[
\mathbf{H}_{n} = \frac{n-2}{2}
\]
for the constant associated with the Hardy inequality on $L^2(\mathbb{R}^n)$.

\begin{proof}[Proof of \eqref{e:MOW_left}]
As stated just after Theorem \ref{t:MOWv2}, the argument in \cite[Theorem 1.1]{m-o-w4} is valid as it stands for $n \geq 2$ and $n \neq 3$ so we refer the reader to that paper for the details. Here we give details for the case $n = 3$; this case requires some clarification since the proof of \cite[Theorem 1.1]{m-o-w4} proceeds initially using integration by parts to write
	\begin{equation*}
	\bigg\| \frac{f}{|x|^2} \bigg\|_{2}^2 = \frac{2}{(n-3)(n-4)} \bigg(\bigg\| \frac{\partial_rf}{|x|} \bigg\|_{2}^2 + \mathrm{Re} \bigg\langle \frac{f}{|x|^2} , \partial_r^2 f \bigg\rangle \bigg).
	\end{equation*}
In our current case $n= 3$, this is valid in the sense that
\begin{equation} \label{e:n=3special}
\bigg\| \frac{\partial_rf}{|x|} \bigg\|_{2}^2 = - \mathrm{Re} \bigg\langle \frac{f}{|x|^2} , \partial_r^2 f \bigg\rangle
\end{equation}
as can be verified by an integration by parts. Thus, we may proceed along the same line of reasoning as in the proof of \cite[Theorem 1.1]{m-o-w4} as follows. Firstly, we begin with the equality
\begin{equation} \label{e:Hardyequality}
	\mathbf{H}_{3}^2 \bigg\| \frac{f}{|x|} \bigg\|_{2}^2 = \|\partial_r f\|_{2}^2   - \bigg\|\partial_r f + \mathbf{H}_{3} \frac{f}{|x|} \bigg\|_{2}^2  
\end{equation}
associated with the Hardy inequality on $\mathbb{R}^3$. To see \eqref{e:Hardyequality}, note that an integration by parts gives
	\begin{equation*}
	\bigg\| \frac{f}{|x|} \bigg\|_{2}^2 = -\frac{1}{\mathbf{H}_{3}} \mathrm{Re} \bigg\langle \frac{f}{|x|} , \partial_r f \bigg\rangle
	\end{equation*}
from which we obtain the desired equality by an application of the (easily verified) equivalence
\begin{equation} \label{e:MOW1.4}
\|u\|^2 = -c \mathrm{Re} \langle u,v \rangle + b \quad \Leftrightarrow \quad \frac{1}{c^2} \|u\|^2 = \|v\|^2 - \bigg\|v + \frac{1}{c}u\bigg\|^2 + \frac{2b}{c^2},
\end{equation}
which holds abstractly for $u,v$ in some Hilbert space, $b \in \mathbb{R}$ and $c \in \mathbb{R} \setminus \{0\}$.

Next, observe that
	\[
	\bigg\| \frac{\partial_rf}{|x|} \bigg\|_{2}^2 = \bigg\|\partial_r \frac{f}{|x|} \bigg\|_{2}^2 + \bigg\| \frac{f}{|x|^2} \bigg\|_{2}^2 + 2\mathrm{Re} \bigg\langle \partial_r \frac{f}{|x|},  \frac{f}{|x|^2} \bigg\rangle
	\]
	and \eqref{e:Hardyequality} implies
	\[
	\bigg\|\partial_r \frac{f}{|x|} \bigg\|_{2}^2 = \mathbf{H}_{3}^2 \bigg\| \frac{f}{|x|^2} \bigg\|_{2}^2 + \bigg\|\partial_r \frac{f}{|x|} + \mathbf{H}_{3} \frac{f}{|x|^2} \bigg\|_{2}^2.
	\]
	Also, a further integration by parts gives
	\[
	\mathrm{Re} \bigg\langle \partial_r \frac{f}{|x|},  \frac{f}{|x|^2} \bigg\rangle = -  \mathbf{H}_{3} \bigg\| \frac{f}{|x|^2} \bigg\|_{2}^2
	\]
	and so we obtain
	\[
	\bigg\| \frac{\partial_rf}{|x|} \bigg\|_{2}^2 = (\mathbf{H}_{3}- 1)^2 \bigg\| \frac{f}{|x|^2} \bigg\|_{2}^2 + \bigg\|\partial_r \frac{f}{|x|} + \mathbf{H}_{3} \frac{f}{|x|^2} \bigg\|_{2}^2. 
	\]
	Yet another integration by parts gives
	\[
	\mathrm{Re}\bigg \langle \frac{f}{|x|^2} , \partial_r^2 f \bigg\rangle  = \mathrm{Re} \bigg\langle \frac{f}{|x|^2} , \Delta_r f \bigg\rangle + 2(\mathbf{H}_{3}-1) \bigg\| \frac{f}{|x|^2} \bigg\|_{2}^2.
	\]
	Therefore, from \eqref{e:n=3special}, we get
	\[
	\bigg\| \frac{f}{|x|^2} \bigg\|_{2}^2 = -\frac{1}{\mathbf{R}_{3}} \mathrm{Re} \bigg\langle \frac{f}{|x|^2} , \Delta_r f \bigg\rangle - \frac{1}{\mathbf{R}_{3}} \bigg\|\partial_r \frac{f}{|x|} + \mathbf{H}_{3} \frac{f}{|x|^2} \bigg\|_{2}^2
	\]
	and hence, by \eqref{e:MOW1.4}, we may conclude \eqref{e:MOW_left} for $n=3$.
\end{proof}

\begin{proof}[Proof of \eqref{e:MOW_leftv2}]
For $a\in\R$, we start by observing that
\begin{align}\label{equality10}
\frac{(n-2-a)^2}{4}\left\|\frac{g}{|x|^{1+\frac{a}{2}}}\right\|_2^2
=\left\|\frac{\partial_rg}{|x|^{\frac{a}{2}}}\right\|_2^2
-\left\|\frac{n-2-a}{2}\frac{g}{|x|^{1+\frac{a}{2}}}+\frac{\partial_rg}{|x|^{\frac{a}{2}}}\right\|_2^2
\end{align}
holds for all $g\in C_0^{\infty}(\R^n\backslash\{0\})$. Indeed, this follows by expanding the second term on the right-hand side, and then using integration by parts to see that
\begin{align*}
2{\rm Re}\bigg\langle\frac{g}{|x|^{1+\frac{a}{2}}}, \frac{\partial_rg}{|x|^{\frac{a}{2}}}\bigg\rangle
=-(n-2-a)\left\|\frac{g}{|x|^{1+\frac{a}{2}}}\right\|^2.
\end{align*}
Now taking $a=n+2$ and $g$ given by
\begin{align*}
g(x)=\left(\frac{\partial_rf}{|x|}+(\mathbf{H}_n-1)\frac{f}{|x|^2}\right)|x|^{\frac{n+4}{2}}
\end{align*}
we see that
\begin{align*}
\partial_rg(x)&=\left(\Delta_rf+\mathbf{R}_n\frac{f}{|x|^2}\right)|x|^{\frac{n+2}{2}},
\end{align*}
and therefore \eqref{e:MOW_leftv2} follows from \eqref{e:MOW_left} and
 \eqref{equality10}.
\end{proof}

\begin{proof}[Proof of \eqref{e:MOW_right}]
It was shown in \cite[Theorem 1.2]{m-o-w4} that
\begin{align*} 
\|\Delta f\|_{2}^2 = \|\Delta_r f \|_{2}^2 + \bigg\| \sum_{j=1}^n L_j^2f \bigg\|_{2}^2 + 2\mathbf{R}_{n} \sum_{j=1}^n \bigg\| \frac{L_jf}{|x|}  \bigg\|_{2}^2 + 2\sum_{j=1}^n \bigg\|  \partial_r L_jf  + \mathbf{H}_n\frac{L_jf}{|x|} \bigg\|_{2}^2
\end{align*}
holds for $f\in C_0^{\infty}(\R^n\backslash\{0\})$ and $n \geq 5$, but in fact the proof works just as well for any $n \geq 2$. 

Now note that $L_j\partial_r = (\partial_r + \frac{1}{|x|})L_j$ and thus, since we also know that $L_j$ commutes with multiplication by radial functions, we may write
\begin{align*}
\sum_{j=1}^n \bigg\|  \partial_r L_jf  + \mathbf{H}_n\frac{L_jf}{|x|} \bigg\|_{2}^2 = \sum_{j=1}^n \bigg\|  L_j \bigg(\partial_r f  + (\mathbf{H}_n - 1) \frac{f}{2|x|} \bigg)\bigg\|_{2}^2.
\end{align*}
By using the integration by parts identity
\begin{equation} \label{e:Ljparts} 
\langle g,L_j h \rangle = - \langle L_j g, h \rangle + (n-1) \langle g, \frac{x_j}{|x|^2} h \rangle,
\end{equation}
and the fact that $\sum_{j=1}^n x_jL_j = 0$, we obtain \eqref{e:MOW_right}.
\end{proof}

\begin{proof}[Proof of \eqref{e:2nd3rdterms}]
Firstly, we have
	\begin{align*}
	\bigg\|\sum_{j=1}^n L_j^2 f \bigg\|_{2}^2 & 
	= \bigg\|\frac{1}{|x|^2} \Delta_{\mathbb{S}^{n-1}} f \bigg\|_{2}^2 \\
	& = \int_0^\infty \| (\Delta_{\mathbb{S}^{n-1}} f)(r(\cdot)) \|_{L^2(\mathbb{S}^{n-1})}^2 r^{n-5} \, \mathrm{d}r \\
	& = \sum_{k=1}^\infty \mu_k^2
	\int_0^\infty \| (P_k f)(r(\cdot)) \|_{L^2(\mathbb{S}^{n-1})}^2 r^{n-5} \, \mathrm{d}r
	\end{align*}
	using \eqref{e:sumL_j2} and the eigenfunction relation \eqref{e:eigen}.
	Therefore
	\begin{equation*} 
	\bigg\|\sum_{j=1}^n L_j^2 f \bigg\|_{2}^2 
	= \sum_{k = 1}^\infty \mu_k^2 \bigg\| \frac{P_k  f}{|x|^2} \bigg\|_{2}^2.
	\end{equation*}
	
	Next, observe that
	\begin{align*}
	\sum_{j=1}^n  \bigg\|\frac{L_j f}{|x|} \bigg\|_{2}^2 & 
	= - \bigg\langle f ,  \frac{1}{|x|^2} \sum_{j=1}^n L_j^2 f \bigg\rangle.
	\end{align*}
	To see this, we use the integration by parts formula \eqref{e:Ljparts}, 
the fact that $L_j$ communtes with multiplication by $|x|^{-2}$, and the identity $\sum_{j=1}^n x_j L_j = 0$. Therefore, by \eqref{e:sumL_j2},
	\begin{align*}
	\sum_{j=1}^n  \bigg\|\frac{L_j f}{|x|} \bigg\|_{2}^2 
	= - \bigg\langle f , \frac{1}{|x|^4} \Delta_{\mathbb{S}^{n-1}} f \bigg\rangle =  -\int_0^\infty \langle f(r(\cdot)) , \Delta_{\mathbb{S}^{n-1}} f(r(\cdot)) \rangle_{L^2(\mathbb{S}^{n-1})} r^{n-5} \, \mathrm{d}r \\
	\end{align*}
	and by orthogonality we get
	\begin{align*}
	\sum_{j=1}^n  \bigg\|\frac{L_j f}{|x|} \bigg\|_{2}^2 & 
	=  \sum_{k=0}^\infty \mu_k \int_0^\infty \|P_k f(r(\cdot))\|_{L^2(\mathbb{S}^{n-1})}^2 r^{n-5} \, \mathrm{d}r = \sum_{k=1}^\infty \mu_k \bigg\|\frac{P_k f}{|x|^2}\bigg\|_{2}^2
	\end{align*}
	as claimed.
\end{proof}

\section{Proofs of Theorems \ref{t:spherical} and \ref{t:LHSbiggerRellich}} \label{section:inequalities}
As preparation for the proof of \eqref{e:sphericalcomparison} in Theorem \ref{t:spherical}, we introduce the weighted one-dimensional version of the Hardy inequality (see, for example, \cite[Proposition 2.4]{c-p}).
\begin{lemma} \label{l:1DHardy}
Let $t \in \mathbb{R} \setminus \{1\}$. Then
\begin{equation} \label{e:1DHardy}
\int_0^\infty |g'(r)|^2 r^{t + 2} \, \mathrm{d}r \geq \bigg(\frac{t+1}{2}\bigg)^2 \int_0^\infty |g(r)|^2 r^{t} \, \mathrm{d}r 
\end{equation}
and the constant is best possible.
\end{lemma}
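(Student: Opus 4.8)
The statement to prove is the weighted one-dimensional Hardy inequality in Lemma~\ref{l:1DHardy}. The plan is to reduce it to an elementary calculation via integration by parts, treating the two cases $t > 1$ and $t < 1$ with a uniform device.

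\medskip

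\textbf{Strategy.} First I would reduce to nice test functions. By a density argument it suffices to prove \eqref{e:1DHardy} for $g \in C_0^\infty((0,\infty))$ (and then verify that both sides are finite and the inequality persists for the function class actually needed in the application, which is the radial profile of an $f \in C_0^\infty(\mathbb{R}^n\setminus\{0\})$ — such profiles are smooth and compactly supported away from $r = 0$, so this reduction is essentially free). For such $g$, consider the quantity $\int_0^\infty |g(r)|^2 r^t \, \mathrm{d}r$ and integrate by parts by writing $r^t = \frac{1}{t+1}\frac{\mathrm{d}}{\mathrm{d}r} r^{t+1}$, which is legitimate since $t \neq 1$ guarantees $t + 1 \neq 0$. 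The boundary terms vanish because $g$ is compactly supported in $(0,\infty)$, giving
\[
\int_0^\infty |g(r)|^2 r^{t}\, \mathrm{d}r = -\frac{2}{t+1}\, \mathrm{Re}\int_0^\infty g(r)\overline{g'(r)}\, r^{t+1}\, \mathrm{d}r.
\]
Now apply Cauchy--Schwarz to the right-hand side: splitting $r^{t+1} = r^{(t)/2 + 1/2} \cdot r^{(t+2)/2 - 1/2}\cdot$, more cleanly write $g \overline{g'} r^{t+1} = (g r^{t/2})\cdot(\overline{g'} r^{(t+2)/2})$, so that
\[
\bigg| \int_0^\infty g \overline{g'}\, r^{t+1}\, \mathrm{d}r \bigg| \leq \bigg(\int_0^\infty |g|^2 r^{t}\, \mathrm{d}r\bigg)^{1/2}\bigg(\int_0^\infty |g'|^2 r^{t+2}\, \mathrm{d}r\bigg)^{1/2}.
\]
Combining the two displays and cancelling one factor of $\big(\int_0^\infty |g|^2 r^t\big)^{1/2}$ (which is harmless: if it is zero the inequality is trivial, and otherwise it is finite and positive) yields exactly $\big(\frac{t+1}{2}\big)^2 \int_0^\infty |g|^2 r^t \leq \int_0^\infty |g'|^2 r^{t+2}$, which is \eqref{e:1DHardy}.

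\medskip

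\textbf{Optimality of the constant.} To show $\big(\frac{t+1}{2}\big)^2$ cannot be enlarged, I would test with (regularized versions of) power functions, mimicking the extremal $g(r) = r^{-(t+1)/2}$ that makes the Cauchy--Schwarz step an equality. Concretely, take $g_\varepsilon(r) = r^{-(t+1)/2 + \varepsilon/2}\psi(r)$ where $\psi$ is a fixed smooth cutoff, $\psi \equiv 1$ on $[1,2]$ and supported in $[1/2,4]$ say (one can also localise near $0$ or $\infty$ depending on the sign of $t+1$ to keep both integrals finite); then as $\varepsilon \to 0$ one computes $\int_0^\infty |g_\varepsilon'|^2 r^{t+2}\, \mathrm{d}r \big/ \int_0^\infty |g_\varepsilon|^2 r^t\, \mathrm{d}r \to \big(\frac{t+1}{2}\big)^2$, because the leading contributions come from differentiating the power (the cutoff derivative terms are lower order in $1/\varepsilon$). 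Alternatively one may simply cite the classical reference \cite{c-p} for sharpness; but since it is short I would include the test-function computation.

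\medskip

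\textbf{Main obstacle.} The inequality itself is routine; the only genuine point requiring care is the domain of validity. For the test-function/optimality argument one must arrange that both weighted integrals converge, which dictates how the cutoff $\psi$ is placed (near $0$ when $t + 1 < 0$, near $\infty$ when $t + 1 > 0$, or on a fixed compact interval away from $0$ and $\infty$ in all cases, which is simplest). For the application inside Theorem~\ref{t:spherical} one also needs the inequality for radial profiles $g$ of functions $f \in C_0^\infty(\mathbb{R}^n\setminus\{0\})$, i.e. $g$ smooth, supported in an annulus $0 < a \le r \le b < \infty$; this is covered by the $C_0^\infty((0,\infty))$ case directly, so no further subtlety arises. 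The condition $t \neq 1$ is exactly what is needed to divide by $t + 1$, and it is of course necessary since for $t = 1$ the constant $\big(\frac{t+1}{2}\big)^2 = 1$ but the sharp constant in $\int_0^\infty |g'|^2 r^3 \geq c \int_0^\infty |g|^2 r$ is $0$ (test with functions concentrating where $r^{-1}$, the formal extremal, fails to be square-integrable against $r\,\mathrm{d}r$ at both ends).
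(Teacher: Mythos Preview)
The paper does not actually prove this lemma; it simply introduces it with a citation to \cite[Proposition 2.4]{c-p}. Your integration-by-parts/Cauchy--Schwarz argument is the standard route and is correct, and your test-function approach to sharpness is exactly in the spirit of the footnote the paper attaches to \eqref{e:1DRellich}.

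One genuine slip: you have inherited what is almost certainly a misprint in the lemma statement and then tried to rationalise it. The excluded value should be $t=-1$, not $t=1$; it is $t=-1$ that makes $t+1=0$ and obstructs the integration by parts, and this is consistent with the exclusions $t\neq -3,-1$ recorded immediately afterwards for \eqref{e:1DRellich}. Your assertions that ``$t\neq 1$ guarantees $t+1\neq 0$'' and that the sharp constant in $\int_0^\infty |g'|^2 r^{3}\,\mathrm{d}r \ge c\int_0^\infty |g|^2 r\,\mathrm{d}r$ is $0$ are both false: for $t=1$ your own argument yields sharp constant $1$, whereas it is at $t=-1$ that the sharp constant degenerates to $0$. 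With the exclusion corrected to $t\neq -1$, your proof is complete.
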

As a consequence, we have the weighted Rellich inequality 
\begin{equation} \label{e:1DRellich}
\int_0^\infty |g''(r)|^2 r^{t + 4} \, \mathrm{d}r \geq  \bigg(\frac{t+3}{2}\bigg)^2 \bigg(\frac{t+1}{2}\bigg)^2 \int_0^\infty |g(r)|^2 r^{t} \, \mathrm{d}r 
\end{equation}
for $t \neq -3,-1$, and it is also known that the constant here is best possible\footnote{To see this directly, one may argue as in \cite{Cazacu} and consider $g$ of the form $g_\varepsilon(r) = r^{-\frac{t + 1}{2} + \frac{\varepsilon}{2}}\chi(r)$ where $\chi$ is infinitely smooth, $\chi(0) = 1$ and $\chi(r)$ vanishes for $r \geq 1$. Then, as $\varepsilon \to 0$, we have $\varepsilon \int_0^\infty |g_\varepsilon(r)|^2 r^{t} \, \mathrm{d}r \to 1$ and $\varepsilon \int_0^\infty |g_\varepsilon'(r)|^2 r^{t + 2} \, \mathrm{d}r  \to (\frac{t+3}{2})^2(\frac{t+1}{2})^2$.}.

\begin{proof}[Proof of Theorem \ref{t:spherical}]
We have already justified \eqref{e:sphericalRellich} after the statement of Theorem \ref{t:spherical}, so here we prove \eqref{e:sphericalcomparison} and justify the claim that the derived constants are best possible.

First let $n \geq 4$. Since $\mathbf{R}_n \geq 0$ the inequality \eqref{e:sphericalcomparison} follows immediately from \eqref{e:MOW_right} with $\mathbf{C}_n = 1$. To see that this is optimal, test the inequality on $f$ of the form
\begin{equation} \label{e:Hkinput}
f(x)= g(|x|) Y_{k}(\tfrac{x}{|x|})
\end{equation}
where $Y_k$ is any spherical harmonic of degree $k$ with $L^2(\mathbb{S}^{n-1})$ norm equal to $1$. Then we easily obtain
\[
C_1 \mu_k^2 \leq \mathbf{C}_n(C_1 \mu_k^2 + C_2 \mu_k + C_3)
\]
where
\begin{align*}
C_1 & := \int_0^\infty \bigg|g''(r) + \frac{n-1}{r}g'(r)\bigg|^2 r^{n-1} \, \mathrm{d}r, \\
C_2 & := \int_0^\infty \bigg(g''(r) + \frac{n - 1}{r} g'(r)\bigg)\overline{g(r)} r^{n-3} \, \mathrm{d}r, \\ 
C_3 & := \int_0^\infty |g(r)|^2 r^{n-5} \, \mathrm{d}r 
\end{align*}
are constants (independent of $k$) which are all finite and $C_1 \neq 0$ by an appropriate choice of $g$. By taking $k \to \infty$, it is clear that $\mathbf{C}_n \geq 1$ is necessary for \eqref{e:sphericalcomparison} to hold.

The case $n = 2$ can also be handled in an easy manner by testing the inequality \eqref{e:sphericalcomparison}  on $f$ of the form \eqref{e:Hkinput} with $k = 1$. Indeed, since $-\Delta_{\mathbb{S}^1} Y_1 = Y_1$ we see that \eqref{e:sphericalcomparison} is equivalent to 
\[
\bigg\| \frac{f}{|x|^2} \bigg\|^2 \leq \mathbf{C}_2 \|\Delta f\|_{2}^2
\]
for such $f$. On the other hand, we know that the above Rellich inequality cannot hold for such $f$ with any constant $\mathbf{C}_2<\infty$ when $n = 2$ (since \eqref{e:P1=0} does not hold for such $f$).

Suppose now that $n = 3$ and $\mathbf{C}_3 = \frac{64}{25}$. To prove \eqref{e:sphericalcomparison}, by \eqref{e:MOW_right}, it is equivalent to prove
\begin{align*}
\frac{3}{2} \sum_{j=1}^3 \bigg\| \frac{L_jf}{|x|}  \bigg\|_{2}^2 \leq \|\Delta_r f \|_{2}^2 + & \bigg(1 - \frac{1}{\mathbf{C}_3}\bigg) \bigg\| \sum_{j=1}^3 L_j^2f \bigg\|_{2}^2 \\
& - 2\bigg\langle  \sum_{j=1}^3 L_j^2 \bigg(\partial_r f  - \frac{f}{2|x|} \bigg), \partial_r f  - \frac{f}{2|x|} \bigg\rangle. 
\end{align*}
For this, we use Theorem \ref{t:2nd3rdterms} to write the goal as
\begin{align*}
\frac{3}{2} \sum_{k=1}^\infty \mu_k \bigg\|\frac{P_k  f}{|x|^2}\bigg\|_{2}^2 \leq \|\Delta_r f \|_{2}^2  + & \bigg(1 - \frac{1}{\mathbf{C}_3}\bigg) \sum_{k=1}^\infty \mu_k^2 \bigg\|\frac{P_k  f}{|x|^2}\bigg\|_{2}^2 \\
& - 2\bigg\langle  \sum_{j=1}^3 L_j^2 \bigg(\partial_r f  - \frac{f}{2|x|} \bigg), \partial_r f  - \frac{f}{2|x|} \bigg\rangle.  
\end{align*}
For all $k \geq 2$ we have
$
\frac{3}{2}\mu_k \leq (1 - \frac{1}{\mathbf{C}_3})\mu_k^2
$
since $\mathbf{C}_3 > \frac{4}{3}$. So it suffices to establish\footnote{Note that this obviously holds with $\mathbf{C}_3 = 4$ and this recovers what can be obtained using the argument by Evans--Lewis \cite[Corollary 1]{e-l}.}
\begin{equation} \label{e:sphericalgoal}
\bigg(\frac{4}{\mathbf{C}_3} - 1\bigg) \bigg\|\frac{P_1  f}{|x|^2}\bigg\|_{2}^2 \leq \|\Delta_r f \|_{2}^2  - 2\bigg\langle  \sum_{j=1}^3 L_j^2 \bigg(\partial_r f  - \frac{f}{2|x|} \bigg), \partial_r f  - \frac{f}{2|x|} \bigg\rangle.  
\end{equation}

Expanding $f$ into spherical harmonics we may write
\begin{equation*} \label{e:decomposition}
f(x)= \sum_{k = 0}^\infty \sum_{m=1}^{N_k} g_{k,m}(|x|) Y_{k,m}(\tfrac{x}{|x|}), 
\end{equation*}
where $N_k := \mathrm{dim}(\mathcal{H}_k(\mathbb{R}^n))$, $\{Y_{k,m} : m=1,\ldots,N_k\}$ is an orthonormal basis of the space of spherical harmonics of degree $k$, and appropriate functions $g_{k,m}$, in which case \eqref{e:sphericalgoal} is equivalent to
\begin{align*}
\bigg(\frac{4}{\mathbf{C}_3} - 1\bigg)  \sum_{m=1}^{N_1} \int_0^\infty |g_{1,m}(r)|^2 \, \frac{\mathrm{d}r}{r^2} 
\leq &
\sum_{k = 0}^\infty \sum_{m=1}^{N_k}  \int_0^\infty \bigg|g_{k,m}''(r) + \frac{2}{r}g_{k,m}'(r)\bigg|^2 r^2 \, \mathrm{d}r \\
& + 2\sum_{k = 0}^\infty \sum_{m=1}^{N_k} \mu_k \int_0^\infty \bigg|g_{k,m}'(r) - \frac{1}{2r}g_{k,m}(r)\bigg|^2 r^2 \, \mathrm{d}r
\end{align*}
and so it suffices to prove
\begin{align*}
\bigg(\frac{4}{\mathbf{C}_3} - 1\bigg) \int_0^\infty |g(r)|^2 \, \frac{\mathrm{d}r}{r^2} 
\leq \int_0^\infty \bigg|g''(r) + \frac{2}{r}g'(r)\bigg|^2 r^2 \, \mathrm{d}r + 4 \int_0^\infty \bigg|g'(r) - \frac{1}{2r}g(r)\bigg|^2  \, \mathrm{d}r.
\end{align*}
By expanding the squares on the right-hand side, integration by parts on the cross terms and rearranging, we see that this is equivalent to
\begin{align} \label{e:sphericallaststep}
\frac{4}{\mathbf{C}_3} \int_0^\infty |g(r)|^2 \, \frac{\mathrm{d}r}{r^2} 
\leq & \int_0^\infty |g''(r)|^2 r^2 \, dr + 6 \int_0^\infty |g'(r)|^2  \, \mathrm{d}r.
\end{align}
By double use of \eqref{e:1DHardy}, we see that
\begin{align*}
\int_0^\infty |g''(r)|^2 r^2 \, \mathrm{d}r + 6 \int_0^\infty |g'(r)|^2  \, \mathrm{d}r & \geq \frac{25}{4} \int_0^\infty |g'(r)|^2  \, \mathrm{d}r  \\
& \geq \frac{25}{16} \int_0^\infty |g(r)|^2 \, \frac{\mathrm{d}r}{r^2},
\end{align*}
and hence, by the choice of $\mathbf{C}_3$ we obtain \eqref{e:sphericallaststep}, and thus \eqref{e:sphericalcomparison}.

To see that $\mathbf{C}_3 = \frac{64}{25}$ is the best constant, consider $f$ of the form \eqref{e:Hkinput} with $k = 1$. From the above argument, for such $f$, we see that \eqref{e:sphericalcomparison} is \emph{equivalent} to \eqref{e:sphericallaststep}, and so matters reduce to obtaining the optimal constant in \eqref{e:sphericallaststep}. One way to argue is to set $g(r) = r^\nu h(r)$ and note that by choosing $\nu = \sqrt{3}$ the inequality \eqref{e:sphericallaststep} is equivalent to
\begin{align*}
\bigg(\frac{4}{\mathbf{C}_3} + 6 \bigg) \int_0^\infty |h(r)|^2 r^{2\nu - 2} \, \mathrm{d}r
\leq \int_0^\infty |h''(r)|^2 r^{2\nu + 2} \, \mathrm{d}r. 
\end{align*}
Now we refer to the fact stated after Lemma \ref{l:1DHardy} that the best constant in \eqref{e:1DRellich} with $t = 2\nu - 2$ is $\frac{121}{16} = (\frac{4}{\mathbf{C}_3} + 6)$. From this, we may deduce the optimality of $\mathbf{C}_3 = \frac{64}{25}$ in \eqref{e:sphericalcomparison}.
\end{proof}

\begin{proof}[Proof of Theorem \ref{t:LHSbiggerRellich}]
From \eqref{e:2nd3rdterms} we have
\begin{align*}
 \bigg\|\sum_{j=1}^n L_j^2 f \bigg\|_{2}^2+ 2\mathbf{R}_{n} \sum_{j=1}^n  \bigg\|\frac{L_j f}{|x|} \bigg\|_{2}^2   = \widetilde{\mathbf{R}}_{n} \bigg\|\frac{P^\perp f}{|x|^2}\bigg\|_2^2 + \sum_{k = 1}^\infty \big(\mu_k( \mu_k + 2\mathbf{R}_{n}) - \widetilde{\mathbf{R}}_{n} \big) \bigg\| \frac{P_kP^\perp f}{|x|^2} \bigg\|_{2}^2
\end{align*}
and clearly $\mu_k( \mu_k + 2\mathbf{R}_{n}) - \widetilde{\mathbf{R}}_{n} \geq \mu_1( \mu_1 + 2\mathbf{R}_{n}) - \widetilde{\mathbf{R}}_{n}  = 0$ for all $k \geq 1$. By \eqref{e:MOW_left} and \eqref{e:MOW_right} we have
\begin{align}\label{mix-equal}
\begin{split}
& \|\Delta f\|_{2}^2 - \mathbf{R}_{n}^2 \bigg\|\frac{f}{|x|^2}\bigg\|_{2}^2 - \widetilde{\mathbf{R}}_{n} \bigg\|\frac{P^\perp f}{|x|^2}\bigg\|_{2}^2-\sum_{k = 1}^\infty \big(\mu_k( \mu_k + 2\mathbf{R}_{n}) - \widetilde{\mathbf{R}}_{n} \big) \bigg\| \frac{P_kP^\perp f}{|x|^2} \bigg\|_{2}^2   \\
& =  2\mathbf{R}_{n} \bigg\| \frac{f_*}{|x|}\bigg\|_{2}^2 + 2\bigg\langle  -\sum_{j=1}^n L_j^2 f_*, f_* \bigg\rangle + \bigg\| \Delta_r f + \mathbf{R}_{n} \frac{f}{|x|^2}   \bigg\|_{2}^2
\end{split}
\end{align}
from which we clearly obtain \eqref{e:remainderR} when $n \geq 4$ (since $\mathbf{R}_n \geq 0$). For $n = 2,3$ we follow a similar argument, except we use \eqref{e:MOW_leftv2} rather than \eqref{e:MOW_left}.
\end{proof}

\section{Further results and remarks} \label{section:final}

\subsection{Alternative derivation of $\mathbf{C}_3 = \frac{64}{25}$ in \eqref{e:sphericalcomparison}} \label{section:alternative}
Here we give a different proof of the estimate
\begin{equation} \label{e:sphericalcomparison3D}
\bigg\| \sum_{j=1}^3 L_j^2f \bigg\|_{L^2(\mathbf{R}^3)}^2 \leq \frac{64}{25} \|\Delta f\|^{2}_{L^2(\mathbf{R}^3)}
\end{equation}
by modifying the argument in \cite[Corollary 1]{e-l}. We begin by using $\Delta=\Delta_r+\frac{1}{|x|^2}\Delta_{\mathbb{S}^{2}}$ and \eqref{e:sumL_j2} to expand 
\[
\|\Delta f\|^{2}_{L^2(\mathbf{R}^3)} = \|\Delta_r f\|^{2}_{L^2(\mathbf{R}^3)} + \bigg\| \sum_{j=1}^3 L_j^2f \bigg\|_{L^2(\mathbf{R}^3)}^2 + 2 \mathrm{Re}\bigg\langle \Delta_r f, \sum_{j=1}^3 L_j^2f  \bigg\rangle.
\]
For the cross term, we use \cite[Eq. (2.15)]{e-l} (which relies on integration by parts and \eqref{e:1DHardy}) to estimate from below
\[
2 \mathrm{Re}\bigg\langle \Delta_r f, \sum_{j=1}^3 L_j^2f \bigg\rangle \geq -\frac{3}{2} \bigg\langle \frac{f}{|x|^2}, \sum_{j=1}^3 L_j^2f \bigg\rangle 
\]
and therefore
\[
2 \mathrm{Re}\bigg\langle \Delta_r f, \sum_{j=1}^3 L_j^2f  \bigg\rangle \geq -\frac{3}{4} \bigg( \frac{1}{\delta} \bigg\| \frac{f}{|x|^2} \bigg\|_{L^2(\mathbf{R}^3)}^2  + \delta  \bigg\| \sum_{j=1}^3 L_j^2f   \bigg\|_{L^2(\mathbf{R}^3)}^2 \bigg)
\]
holds for any $\delta > 0$. Using this, along with \eqref{e:radialRellich}, we obtain
\[
\bigg(1 - \frac{3\delta}{4} \bigg)\bigg\| \sum_{j=1}^3 L_j^2f \bigg\|_{L^2(\mathbf{R}^3)}^2
\leq
\bigg(\frac{3}{4\delta}  - \frac{9}{16}\bigg)\bigg\| \frac{f}{|x|^2} \bigg\|^2_{L^2(\mathbf{R}^3)} +
\|\Delta f\|^{2}_{L^2(\mathbf{R}^3)}.
\]
Up to here we have followed the argument in \cite[Corollary 1]{e-l}. At this point, the argument proceeds in \cite{e-l} by using the classical Rellich inequality \eqref{Rellich1}\footnote{Indeed, using \eqref{Rellich1} and rearranging, we obtain
\[
\bigg\| \sum_{j=1}^3 L_j^2f \bigg\|_{L^2(\mathbf{R}^3)}^2 \leq \frac{16}{3\delta(4 - 3\delta)} \|\Delta f\|^{2}_{L^2(\mathbf{R}^3)}
\]
for $0 < \delta < 4/3$. Optimizing in $\delta$ yields \eqref{e:sphericalcomparison3D} with constant $4$ on the right-hand side.}. However, we observe that in order to prove \eqref{e:sphericalcomparison3D} it suffices to consider $f$ satisfying $Pf = 0$ (since $\sum_j L_j^2 f = \sum_j L_j^2 P^\perp f$) and hence we may instead apply the refined version of the Rellich inequality \eqref{e:orthogonalR}. When $n = 3$ this reads
\begin{equation*} 
\bigg\|\frac{f}{|x|^2}\bigg\|_{L^2(\mathbf{R}^3)}^2 \leq \frac{16}{25}\|\Delta f\|_{L^2(\mathbf{R}^3)}^2\qquad\text{for}\quad Pf=0, 
\end{equation*}
and, as long as $\delta < \frac{4}{3}$, this yields
\[
\bigg\| \sum_{j=1}^3 L_j^2f \bigg\|_{L^2(\mathbf{R}^3)}^2 \leq \frac{16(3 + 4\delta)}{25\delta(4 - 3\delta)} \|\Delta f\|^{2}_{L^2(\mathbf{R}^3)}.
\]
An elementary calculus argument reveals that $\delta = \frac{1}{2}$ is the optimal choice, and this $\delta$ gives the desired estimate \eqref{e:sphericalcomparison3D}.

\subsection{An abstract version of Theorem \ref{t:spherical}} \label{section:abstract}
The argument in Section \ref{section:alternative} actually extends to the abstract setting considered in \cite[Corollary 1]{e-l} concerning the generalized Laplacian
\[
\mathcal{L} = \Delta_r + \frac{1}{r^2} \Lambda.
\]
Here, $-\Lambda$ is a non-negative, self-adjoint operator on $L^2(\mathbb{S}^{n-1})$ whose spectrum is purely discrete with isolated eigenvalues $\{ \lambda_k \}_{k \in \mathcal{I}}$ which may only accumulate at infinity\footnote{See \cite{e-l} (and also \cite{c-c-f}) for concrete examples of such operators.}. We also assume that zero is an eigenvalue of $-\Lambda$ and write $\lambda_0 = 0$. Then we are able to obtain the following.
\begin{theorem} \label{t:abstract}
Let $n = 2,3$. Then
\begin{equation} \label{e:sphericalcomparison3D_abstract}
\bigg\| \frac{1}{|x|^2} \Lambda f \bigg\|_{2}^2 \leq ( 1 - \mathbf{R}_n \sqrt{M_n})^2 \|\Delta f\|^{2}_{2},
\end{equation}
where
$
M_n = \max_{k \in \mathcal{I} \setminus \{0\}} (\lambda_k + \mathbf{R}_n)^{-2}.
$
Moreover, when $M_n = (\lambda_{k_0} + \mathbf{R}_n)^{-2}$ and $\lambda_{k_0} > -\mathbf{R}_n$, the constant in \eqref{e:sphericalcomparison3D_abstract} is best possible.
\end{theorem}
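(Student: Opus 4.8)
The plan is to replicate the argument of Section \ref{section:alternative} verbatim, only now keeping track of the abstract eigenvalue data $\{\lambda_k\}_{k \in \mathcal{I}}$ in place of the concrete values $\mu_k = k(k+n-2)$. First I would expand
\[
\|\Delta f\|_{2}^2 = \|\Delta_r f\|_{2}^2 + \bigg\| \frac{1}{|x|^2}\Lambda f \bigg\|_{2}^2 + 2\,\mathrm{Re}\bigg\langle \Delta_r f, \frac{1}{|x|^2} \Lambda f \bigg\rangle,
\]
and reduce to $f$ with $Pf = 0$, since $\Lambda$ annihilates the $\lambda_0 = 0$ eigenspace; replacing $f$ by $P^\perp f$ leaves both sides of \eqref{e:sphericalcomparison3D_abstract} unchanged. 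The cross term is controlled by the same integration-by-parts-plus-one-dimensional-Hardy computation underlying \cite[Eq. (2.15)]{e-l}, which gives
\[
2\,\mathrm{Re}\bigg\langle \Delta_r f, \frac{1}{|x|^2}\Lambda f \bigg\rangle \geq 2\mathbf{R}_n \bigg\langle \frac{f}{|x|^2}, \frac{1}{|x|^2}\Lambda f \bigg\rangle,
\]
and then a Cauchy--Schwarz (Young) split with a free parameter $\delta > 0$ followed by an application of the radial Rellich inequality \eqref{e:radialRellich}.

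Carrying this out, one arrives at an inequality of the shape
\[
\bigg(1 - c_1\delta\bigg) \bigg\| \frac{1}{|x|^2}\Lambda f \bigg\|_2^2 \leq \bigg( \frac{c_1}{\delta} - c_2 \bigg) \bigg\| \frac{P^\perp f}{|x|^2} \bigg\|_2^2 + \|\Delta f\|_2^2
\]
for appropriate constants $c_1, c_2$ depending on $n$ (with $c_1 = -\mathbf{R}_n \geq 0$ for $n = 2,3$). Here the key new ingredient is the sharp Hardy-type inequality
\[
M_n^{-1} \bigg\| \frac{P^\perp f}{|x|^2} \bigg\|_2^2 \leq \bigg\| \frac{1}{|x|^2}\Lambda f \bigg\|_2^2,
\]
which is the abstract analogue of \eqref{e:sphericalRellich} and follows by the same spectral-decomposition argument: expanding $f$ into eigenfunctions of $-\Lambda$ and using $\lambda_k \geq \min_{k \in \mathcal{I}\setminus\{0\}} \lambda_k$. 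One then feeds this (in the form giving the refined Rellich-type bound analogous to \eqref{e:orthogonalR}, namely $\|\frac{P^\perp f}{|x|^2}\|_2^2 \le (\mathbf{R}_n + \lambda_*)^{-2}\cdot(\text{something})$, or more efficiently substitutes directly) into the displayed inequality, solves for $\| \frac{1}{|x|^2}\Lambda f\|_2^2$, and optimizes over $\delta \in (0, 1/c_1)$ by elementary calculus. The optimal $\delta$ should come out in closed form and produce exactly the constant $(1 - \mathbf{R}_n\sqrt{M_n})^2$.

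For optimality, when $M_n = (\lambda_{k_0} + \mathbf{R}_n)^{-2}$ with $\lambda_{k_0} > -\mathbf{R}_n$, I would test on inputs of the form $f(x) = g(|x|) Y(x/|x|)$ where $Y$ is a normalized eigenfunction of $-\Lambda$ with eigenvalue $\lambda_{k_0}$. For such $f$ every inequality used in the chain becomes an equality up to the one-dimensional Hardy/Rellich inequalities, whose sharpness is recorded after Lemma \ref{l:1DHardy}; passing to the extremizing family $g_\varepsilon(r) = r^{\cdots + \varepsilon/2}\chi(r)$ and letting $\varepsilon \to 0$ forces the ratio to converge to $(1 - \mathbf{R}_n\sqrt{M_n})^2$. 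The main obstacle I anticipate is purely bookkeeping: verifying that the one-dimensional estimate behind \cite[Eq. (2.15)]{e-l} has the right constant $\mathbf{R}_n$ (equivalently $\frac{n-1}{2}\cdot\frac{?}{?}$, which specializes to $-\frac{3}{4}$-type factors when $n=3$) and that the $\delta$-optimization is consistent with the concrete computation in Section \ref{section:alternative} when $\Lambda = \Delta_{\mathbb{S}^{n-1}}$, $n = 3$, where $\lambda_{k_0} = \mu_1 = 2$, $M_3 = (2 - \frac{3}{4})^{-2} = \frac{16}{25}$, and indeed $(1 - \mathbf{R}_3\sqrt{M_3})^2 = (1 + \frac{3}{4}\cdot\frac{4}{5})^2 = (\frac{8}{5})^2 = \frac{64}{25}$, matching \eqref{e:sphericalcomparison3D}. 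Confirming this consistency check is the cleanest way to be sure the abstract constant has been assembled correctly.
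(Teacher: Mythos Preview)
Your overall plan---replicate Section~\ref{section:alternative} with $\Delta_{\mathbb{S}^{n-1}}$ replaced by the abstract $\Lambda$, and then optimize in $\delta$---is exactly what the paper does, and the optimality argument via $f(x)=g(|x|)u_{k_0}(x/|x|)$ is also the paper's route (carried out there by the substitution $g(r)=r^{\nu}h(r)$ and the sharp constant in \eqref{e:1DRellich}).

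There is, however, a genuine gap in your ``key new ingredient''. The displayed inequality
\[
M_n^{-1}\bigg\|\frac{P^{\perp}f}{|x|^2}\bigg\|_2^2 \le \bigg\|\frac{1}{|x|^2}\Lambda f\bigg\|_2^2
\]
is \emph{not} the right substitute, and it is not even true in general: by spectral decomposition the sharp constant on the left is $(\min_{k\neq 0}\lambda_k)^2$, which need not coincide with $M_n^{-1}=\min_{k\neq 0}(\lambda_k+\mathbf{R}_n)^2$ (take e.g.\ $n=3$ and a small positive eigenvalue $\lambda_1<|\mathbf{R}_3|/2$). Moreover, even if this inequality held, substituting it and optimizing in $\delta$ yields the constant $(1-(-\mathbf{R}_n)\sqrt{M_n})^{-2}$, not $(1-\mathbf{R}_n\sqrt{M_n})^2$; your consistency check with $n=3$, $M_3=\tfrac{16}{25}$ would then produce $\tfrac{25}{4}$ rather than $\tfrac{64}{25}$.

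What is actually needed---and what your parenthetical gestures toward---is the abstract refined \emph{Rellich} inequality
\[
\frac{1}{M_n}\bigg\|\frac{f}{|x|^2}\bigg\|_2^2 \le \|\mathcal{L} f\|_2^2 \qquad\text{for }f\text{ orthogonal to the }0\text{-eigenspace},
\]
i.e.\ the analogue of \eqref{e:orthogonalR} with $\|\mathcal{L} f\|_2^2$ (not $\|\tfrac{1}{|x|^2}\Lambda f\|_2^2$) on the right. This does not follow from a bare spectral decomposition of $\Lambda$; it requires the full Rellich-type argument and is supplied by \cite[Theorem~1]{e-l}. Once you feed \emph{this} bound into your $\delta$-inequality and optimize (the minimum occurs at $1/\delta = -\mathbf{R}_n + M_n^{-1/2}$), you land exactly on $(1-\mathbf{R}_n\sqrt{M_n})^2$ and the consistency check with Section~\ref{section:alternative} goes through.
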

One can check that the constant in \eqref{e:sphericalcomparison3D_abstract} is consistent with Theorem \ref{t:spherical} in the sense that it becomes $\frac{64}{25}$ when $n = 3$ and infinite when $n = 2$. When $n \geq 4$ the inequality in \eqref{e:sphericalcomparison3D_abstract} was obtained in \cite[Corollary 1]{e-l} with constant $1$ and, by the obvious extension of the argument in our earlier proof of Theorem \ref{t:spherical}, this is best possible. 

\begin{proof}[Proof of Theorem \ref{t:abstract}] 
We consider the case $n = 3$. Since the argument for $n = 2$ is almost the same, we omit the details. 

To see \eqref{e:sphericalcomparison3D_abstract} we follow the argument in Section \ref{section:alternative} for the special case $\Lambda = \Delta_{\mathbb{S}^2}$, and simply replace use of \eqref{e:orthogonalR} by
\begin{equation} \label{e:orthogonalR_abstract}
\frac{1}{M_3} \bigg\|\frac{f}{|x|^2}\bigg\|_{L^2(\mathbf{R}^3)}^2 \leq \|\mathcal{L} f\|_{L^2(\mathbf{R}^3)}^2\qquad \text{whenever $\int_{\mathbb{S}^2} f(r,\theta) u_0(\theta) \, \mathrm{d}\sigma(\theta) = 0$ for all $r > 0$},
\end{equation}
where $u_0$ denotes a normalized eigenfunction of $-\Lambda$ corresponding to the eigenvalue $0$. The inequality \eqref{e:orthogonalR_abstract} follows from \cite[Theorem 1]{e-l}.

To see the optimality of the constant in \eqref{e:sphericalcomparison3D_abstract} when $M_3 = (\lambda_{k_0} - \tfrac{3}{4})^{-2}$ and $\lambda_{k_0} > \frac{3}{4}$, first note that in this case we have
\[
 1 + \frac{3}{4} \sqrt{M_3}  = \frac{4\lambda_{k_0}}{4\lambda_{k_0} - 3}.
\]
Next observe that for functions of the form
\[
f(x) = g(|x|) u_{k_0}(\tfrac{x}{|x|}),
\]
where $u_{k_0}$ is a normalized eigenfunction of $-\Lambda$ corresponding to the eigenvalue $k_0$,
the inequality $\| \frac{1}{|x|^2} \Lambda f \|_{2}^2 \leq C \|\Delta f\|^{2}_{2}$ is equivalent to 
\[
\lambda_{k_0} ( 2 + (\tfrac{1}{C} - 1) \lambda_{k_0} )  \int_0^\infty |g(r)|^2 \, \frac{\mathrm{d}r}{r^2} \leq  \int_0^\infty |g''(r)|^2 r^2 \,\mathrm{d}r  + 2(1 + \lambda_{k_0}) \int_0^\infty |g'(r)|^2 \,\mathrm{d}r.
\]
We now use the fact that, given any $R > 0$, the best constant in the inequality
\begin{align} \label{e:sphericallaststep_abstract}
\widetilde{C} \int_0^\infty |g(r)|^2 \, \frac{\mathrm{d}r}{r^2} 
\leq & \int_0^\infty |g''(r)|^2 r^2 \, \mathrm{d}r + R \int_0^\infty |g'(r)|^2  \, \mathrm{d}r
\end{align}
is given by $\widetilde{C} = \frac{1}{4}(\frac{1}{4} + R)$. This follows because the inequality \eqref{e:sphericallaststep_abstract} is equivalent to
\[
\int_0^\infty |h''(r)|^2 r^{2\nu + 2} \, \mathrm{d}r \geq (\widetilde{C} - \tfrac{1}{4}R(2-R)) \int_0^\infty |h(r)|^2 r^{2\nu - 2} \, \mathrm{d}r 
\]
via the relabeling $g(r) = r^\nu h(r)$ with $\nu = (\frac{R}{2})^{1/2}$. We then invoke the fact that the constant in \eqref{e:1DRellich} is best possible. 

From the above discussion we may conclude that
\[
\lambda_{k_0} ( 2 + (\tfrac{1}{C} - 1) \lambda_{k_0} ) \leq \tfrac{1}{4}(\tfrac{9}{4} + 2\lambda_{k_0}),
\]
or in other words $C \geq (\frac{4\lambda_{k_0}}{4\lambda_{k_0} - 3})^2$, and this yields the optimality of the constant in \eqref{e:sphericalcomparison3D_abstract}.
\end{proof}

\subsection{Sobolev space extension} \label{section:density}
The following lemma allows one to extend \eqref{e:orthogonalR} to all $f$ in $H^2(\mathbb{R}^n)$ for which $Pf = 0$ whenever $n \geq 3$. Here $H^2(\mathbb{R}^n)$ is the Sobolev space whose norm is given by
$
\|f\|_{H^2} = \| (1-\Delta) f \|_{2}.
$
Equivalently, we shall show that
\[
(\mathbf{R}_{n} + n-1)^2\bigg\|\frac{P^\perp f}{|x|^2}\bigg\|_{2}^2 \leq \|\Delta P^\perp f\|_{2}^2
\]
extends to all $f$ in $H^2(\mathbb{R}^n)$, as long as $n \geq 3$.
\begin{lemma} \label{l:density}
If $n \geq 3$ then $P^\perp C_0^\infty(\mathbb R^n\setminus\{0\})$ is dense in $P^\perp H^2(\mathbb R^n)$.
\end{lemma}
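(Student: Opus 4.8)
The plan is to establish the density of $P^\perp C_0^\infty(\mathbb{R}^n \setminus \{0\})$ in $P^\perp H^2(\mathbb{R}^n)$ by a two-stage approximation: first pass from a general $f \in H^2(\mathbb{R}^n)$ to one in $C_0^\infty(\mathbb{R}^n)$, and then excise the origin while staying in $H^2$. Since $P^\perp$ is a bounded operator on $H^2$ (it commutes with $\Delta$, hence with $(1-\Delta)$), it suffices to show $C_0^\infty(\mathbb{R}^n \setminus \{0\})$ is dense in $H^2(\mathbb{R}^n)$ in the $H^2$-norm when $n \geq 3$: applying $P^\perp$ to an approximating sequence then does the job. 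The first stage is classical — $C_0^\infty(\mathbb{R}^n)$ is dense in $H^2(\mathbb{R}^n)$ by mollification and truncation — so the real content is the second stage: given $\varphi \in C_0^\infty(\mathbb{R}^n)$, I need $\varphi_j \in C_0^\infty(\mathbb{R}^n \setminus \{0\})$ with $\varphi_j \to \varphi$ in $H^2$.

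For the cut-off near the origin, I would introduce a radial Lipschitz (or smooth) cut-off $\theta_j$ which vanishes on $|x| \leq 1/j$, equals $1$ for $|x| \geq 2/j$, and satisfies the natural bounds $|\nabla \theta_j| \lesssim j$ and $|\nabla^2 \theta_j| \lesssim j^2$; then set $\varphi_j = \theta_j \varphi$ (smoothing $\theta_j$ slightly if one insists on $C^\infty$ rather than merely Lipschitz cut-offs). By the Leibniz rule, $\Delta(\varphi_j - \varphi) = (\theta_j - 1)\Delta\varphi + 2\nabla\theta_j \cdot \nabla\varphi + \varphi\, \Delta\theta_j$, and similarly for the lower-order terms. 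The first term tends to $0$ in $L^2$ by dominated convergence. The crucial point is controlling the terms carrying derivatives of $\theta_j$, which live on the annulus $A_j = \{ 1/j \leq |x| \leq 2/j \}$ whose measure is $\sim j^{-n}$. Here the dimension hypothesis $n \geq 3$ enters decisively. The worst term is $\varphi\,\Delta\theta_j$: its $L^2$-norm is bounded by $\|\varphi\|_{L^\infty} \cdot j^2 \cdot |A_j|^{1/2} \sim j^{2 - n/2}$, which tends to $0$ precisely when $n > 4$.

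The remaining obstacle is therefore dimensions $n = 3$ and $n = 4$, where the crude bound fails, and overcoming it is the main technical step. The fix is to not place the brunt of the derivatives on the constant factor but to exploit vanishing: since we are free to subtract a constant from $\varphi$ on a small ball without leaving the relevant class (note $\varphi - \varphi(0)$ differs from $\varphi$ by a constant, which is not in $H^2$, so this particular trick must be applied only to the local behaviour), the right move is a logarithmic cut-off. Take $\theta_j$ of the form $\theta_j(x) = \Theta\big( \frac{\log(j|x|)}{\log j} \big)$ for $|x|$ in an intermediate range (with $\Theta$ smooth, $\Theta(s) = 0$ for $s \leq 0$ and $\Theta(s) = 1$ for $s \geq 1$), which gives $|\nabla \theta_j| \lesssim \frac{1}{|x| \log j}$ and $|\nabla^2\theta_j| \lesssim \frac{1}{|x|^2 \log j}$ on its support. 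Then $\|\varphi\, \Delta\theta_j\|_{L^2}^2 \lesssim \frac{1}{(\log j)^2} \int_{1/j \leq |x| \leq 1/\sqrt{j}} |x|^{-4}\, \mathrm{d}x \lesssim \frac{1}{(\log j)^2} \int_{1/j}^{1/\sqrt j} r^{n - 5}\, \mathrm{d}r$, and for $n = 3, 4$ the radial integral is $O(1)$ or $O(\log j)$, so the whole expression is $O((\log j)^{-2})$ or $O((\log j)^{-1}) \to 0$; the cross term with $\nabla\theta_j \cdot \nabla\varphi$ is easier since $\nabla\varphi$ is bounded and carries only one derivative of $\theta_j$, and one checks directly it also vanishes. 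Combining the two stages and then applying the bounded operator $P^\perp$ completes the proof. I expect writing out the precise estimates for the logarithmic cut-off in $n = 3$ (the borderline case relevant to the paper) to be the only place requiring genuine care.
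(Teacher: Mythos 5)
Your proposed reduction contains a genuine gap in the three-dimensional case, which is precisely the borderline dimension that matters. You claim it suffices to show $C_0^\infty(\mathbb R^n\setminus\{0\})$ is dense in $H^2(\mathbb R^n)$ for $n\geq 3$ and then apply the bounded operator $P^\perp$. But this density statement is \emph{false} for $n=3$: the Sobolev embedding $H^2(\mathbb R^3)\hookrightarrow C^{0,1/2}(\mathbb R^3)$ makes point evaluation $f\mapsto f(0)$ a bounded linear functional that annihilates $C_0^\infty(\mathbb R^3\setminus\{0\})$, so the closure of $C_0^\infty(\mathbb R^3\setminus\{0\})$ in $H^2(\mathbb R^3)$ is contained in $\{f:f(0)=0\}$, a proper closed subspace. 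Any cut-off argument applied to a generic $\varphi\in C_0^\infty(\mathbb R^3)$ (with $\varphi(0)\neq 0$) must therefore fail. Indeed, your own logarithmic cut-off computation is in error here: with $|\Delta\theta_j|\lesssim\frac{1}{|x|^2\log j}$ on $\{1/j\leq |x|\leq 1/\sqrt j\}$, the radial integral for $n=3$ is
\[
\int_{1/j}^{1/\sqrt j} r^{n-5}\,\mathrm{d}r = \int_{1/j}^{1/\sqrt j} r^{-2}\,\mathrm{d}r = j-\sqrt j \sim j,
\]
not $O(1)$ as you claim, so $\|\varphi\,\Delta\theta_j\|_{L^2}^2\gtrsim j/(\log j)^2\to\infty$. (For $n=4$ the integral is indeed $O(\log j)$ and the log cut-off does work, consistent with the point having zero $W^{2,2}$-capacity in that critical dimension.)

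The missing idea is that the lemma is only true because $P^\perp$ already forces the relevant vanishing at the origin: for continuous $f$ one has $(P^\perp f)(0)=0$, which buys an extra factor $|x|$ near the origin. To make this quantitative one cannot simply mollify $f$ first (since $\varphi(0)\approx f(0)$ can be nonzero); one must truncate $P^\perp f$ itself. The paper's proof does exactly this: it considers $f_j=\rho_j*(\zeta_j P^\perp f)$, decomposes $P^\perp f_j-P^\perp f$, and reduces the error to the bound $\int|x|^{-4}|P^\perp f|^2\,\mathrm{d}x<\infty$ (together with $\int|x|^{-2}|\nabla P^\perp f|^2<\infty$ from \eqref{e:HardyRellich}). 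That fourth-order weighted bound is established by representing $P^\perp f(r\omega)$ as a spherical average of differences $f(r\omega)-f(r\omega')$, writing those as integrals of $\nabla f$ along chords, and then applying the Hardy inequality; this is where the vanishing at the origin enters, and it is the step your proposal does not supply. If you wish to keep your two-stage structure, you would need to work with $\varphi=P^\perp\psi$ for $\psi\in C_0^\infty(\mathbb R^n)$ (so $\varphi(0)=0$ and $|\varphi(x)|\lesssim|x|$ near the origin) and re-run the cut-off estimates using that decay; a plain cut-off of scale $1/j$ then yields $\|\varphi\,\Delta\theta_j\|_{L^2}^2\lesssim j^{-1}\to 0$ in $n=3$ without any logarithmic refinement.
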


In the following proof of this lemma, we write $A \lesssim B$ to mean $A \leq CB$ for an appropriate constant $C$, and $\nabla^2$ denotes the Hessian. Also, we shall use the fact that $\Delta$ commutes with $P$; this can be seen, for example, on the  frequency domain using the fact that $P$ commutes with the Fourier transform and multiplication by radial functions.

\begin{proof}[Proof of Lemma \ref{l:density}]
Following the proof of in \cite[Proposition 9]{b-m-o}, we consider $\zeta_j \in C^\infty_0(\mathbb{R}^n \setminus \{0\})$ and $\rho_j \in C^\infty_0(\mathbb{R}^n)$ for $j \geq 1$ defined as follows. Take $C^\infty$ functions $\xi,\eta : \R \to [0,1]$ such that $\xi$ vanishes 
on $(-\infty,1/2]$ and coincides with $1$ on $[1,\infty)$, and $\eta$ coincides with $1$ on 
$(-\infty,1]$ and vanishes on $[2,\infty)$. For $j \geq 1$, 
we define $\zeta_{j}$ by 
\[
\zeta_{j}(x)=\xi(j|x|)\eta(\tfrac{|x|}{j}).
\]   
Then $\zeta_{j}$ is supported in the annulus $\{x\in\R^{n} : \frac{1}{2j}\le|x|\le2j\}$, 
and $\zeta_{j}(x)$ coincides with $1$ when $\frac{1}{j}\le|x|\le j$. Furthermore, we have
\begin{align} \label{e:zeta1}
|\nabla\zeta_{j}(x)|\lesssim
\begin{cases}
\frac{1}{|x|} |\xi'(j|x|)| &\text{if} \ 
|x|\in [\frac{1}{2j},\frac{1}{j}], \\
0 &\text{if} \ |x|\in [0,\frac{1}{2j}]
\cup [\frac{1}{j},j]\cup[2j,\infty], \\
\frac{1}{j}
&\text{if} \ |x|\in [j,2j].
\end{cases}
\end{align}
and 
\begin{align} \label{e:zeta2}
|\nabla^2\zeta_{j}(x)|\lesssim
\begin{cases}
\frac{j}{|x|} |\xi'(j|x|)| + j^2|\xi''(j|x|)| &\text{if} \ 
|x|\in [\frac{1}{2j},\frac{1}{j}], \\
0 &\text{if} \ |x|\in [0,\frac{1}{2j}]
\cup [\frac{1}{j},j]\cup[2j,\infty], \\
\frac{1}{j^2}
&\text{if} \ |x|\in [j,2j].
\end{cases}
\end{align}
Next, we define $\rho_{j}(x)=j^{n}\rho(jx)$ where $\rho : \R^n \to [0,\infty)$ is a 
$C^{\infty}$ function supported in the ball $\{x\in\R^{n} : |x|\le1/4\}$ and $\|\rho\|_{1}=1$. 
 
Take $f \in H^2(\mathbb{R}^n)$ and define $f_j = \rho_j * (\zeta_jP^\perp f)$. It suffices to prove that $P^{\perp}f_{j}\to P^{\perp}f$ in $H^{2}(\R^{n})$ as $j\to\infty$. To this end, we decompose
\begin{equation*}
P^\perp f_j - P^\perp f =  \rho_j * P^\perp f - P^\perp f -P(\rho_j * P^\perp f) - P^\perp(\rho_j * ((1-\zeta_j)P^\perp f)) 
\end{equation*}
Writing $g := (1-\Delta) f$, it follows that $P^\perp g \in L^2(\mathbb{R}^n)$ and thus
\[
\|\rho_j * P^\perp f - P^\perp f\|_{H^2} = \| \rho_j * P^\perp g - P^\perp g\|_2 \to 0.
\]
Also, by using the Cauchy--Schwarz and Minkowski inequalities, we have
\[
\|P(\rho_j * P^\perp f)\|_{H^2} = \| P(\rho_j * P^\perp g) \|_2 \leq \int_{|y| \leq 1/4} \rho(y) \| P^\perp g (\cdot - \tfrac{1}{j}y) - P^\perp g\|_2 \, \mathrm{d}y \to 0.
\]
For the remaining term in the decomposition, we first estimate
\[
\| P^\perp(\rho_j * ((1-\zeta_j)P^\perp f))  \|_{H^2} \leq \| \rho_j * ((1-\zeta_j)P^\perp f) \|_{H^2}
\]
and, since we clearly have $\| (1-\zeta_j)P^\perp f\|_2 + \| (1-\zeta_j)\Delta P^\perp f\|_2 \to 0$, it suffices to show
\begin{equation} \label{e:2noP}
 \lim_{j \to \infty} \| \nabla \zeta_j \cdot \nabla P^\perp f \|_2 = \lim_{j \to \infty}  \| \Delta \zeta_j P^\perp f \|_2 = 0.
\end{equation}
From \eqref{e:HardyRellich} we have
$
\int_{\mathbb{R}^n} |x|^{-2} |\nabla P^\perp f(x)|^2 \, \mathrm{d}x < \infty
$
so the dominated convergence theorem and the estimate \eqref{e:zeta1} quickly implies $\| \nabla \zeta_j \cdot \nabla P^\perp f \|_2 \to 0$.

For the remaining limit in \eqref{e:2noP}, we use the bound \eqref{e:zeta2} to get 
\begin{align*}
\| \Delta \zeta_j P^\perp f\|_2^2 \lesssim \int_{\mathbb{R}^n} \frac{1}{|x|^4} |\xi'(j|x|)P^\perp f(x)|^2 \, \mathrm{d}x + \int_{\mathbb{R}^n} \frac{1}{|x|^4} |\xi''(j|x|)P^\perp f(x)|^2 \, \mathrm{d}x + \frac{1}{j^4}\int_{\mathbb{R}^n} |P^\perp f(x)|^2 \, \mathrm{d}x
\end{align*}
and so, by the dominated convergence theorem, it suffices to check that $\int_{\mathbb{R}^n} |x|^{-4} |P^\perp f(x)|^2 \, \mathrm{d}x < \infty$.
To see this, we write
\begin{align*}
&\int_{\mathbb{R}^n} \frac{1}{|x|^4} |P^\perp f(x)|^2 \, \mathrm{d}x \\
& \qquad = \frac{1}{\sigma(\mathbb{S}^{n-1})^2} \int_0^{\infty} \int_{\mathbb{S}^{n-1}} \bigg| \int_{\mathbb{S}^{n-1}} (f(r\omega) - f(r\omega')) \, \mathrm{d}\sigma(\omega') \bigg|^2 \, \mathrm{d}\sigma(\omega) \, r^{n-5}  \mathrm{d}r.
\end{align*}
Therefore it follows that
\[
\int_{\mathbb{R}^n} \frac{1}{|x|^4} |P^\perp f(x)|^2 \, \mathrm{d}x \lesssim \int_{\mathbb{S}^{n-1}} \int_{1/2}^1  \int_{\mathbb{R}^n} \frac{1}{|x|^2} |\nabla f (tx + (1-t)|x|\theta)|^2 \, \mathrm{d}x \mathrm{d}t \mathrm{d}\sigma(\theta)
\]
and hence the Hardy inequality implies
\[
\int_{\mathbb{R}^n} \frac{1}{|x|^4} |P^\perp f(x)|^2 \, \mathrm{d}x \lesssim \int_{\mathbb{S}^{n-1}} \int_{1/2}^1 \int_{\mathbb{R}^n} |\nabla^2 f (tx + (1-t)|x|\theta)|^2 \, \mathrm{d}x \mathrm{d}t \mathrm{d}\sigma(\theta).
\]
Using the change of variables $x \mapsto tx + (1-t)|x|\theta$, it can be shown that
\[
\int_{\mathbb{S}^{n-1}} \int_{1/2}^1 \int_{\mathbb{R}^n} |\nabla^2 f (tx + (1-t)|x|\theta)|^2 \, \mathrm{d}x \mathrm{d}t \mathrm{d}\sigma(\theta) \lesssim \int_{\mathbb{R}^n} |\nabla^2 f(x)|^2 \, \mathrm{d}x < \infty
\]
as desired.
\end{proof}

\begin{remark}
For $n \geq 3$, the above density considerations allow us to extend the Rellich equality to $H^2(\mathbb{R}^n)$ under the constraint $Pf = 0$. The classical inequality \eqref{Rellich1} extends to all of $H^2(\mathbb{R}^n)$ as long as $n \geq 5$, and therefore, as well as raising the value of the constant, adding the constraint $Pf=0$ also allows one to consider a larger range of $n$.
\end{remark}

\subsection{Non-existence of maximizers} \label{section:nonexistence}
We show that there are no non-trivial cases of equality for \eqref{e:orthogonalR} by re-visiting our proof of this inequality and examining when the dropped terms vanish. In particular, assuming that equality holds in \eqref{e:remainderR} it follows from \eqref{mix-equal} that
$
\Delta_r f  + \frac{\mathbf{R}_{n}}{|x|^2} f  = 0
$
and, via the identity
\[
\Delta_r f  + \frac{\mathbf{R}_{n}}{|x|^2} f = |x|^{1-\frac{n}{2}} \partial_r(|x|^{-1}\partial_r(|x|^{\frac{n}{2}}f))
\]
we obtain
\begin{equation} \label{e:psirep}
|x|^{-1} \partial_r(|x|^{\frac{n}{2}}f)(x) = \psi\Big(\frac{x}{|x|}\Big)
\end{equation}
for some function $\psi : \mathbb{S}^{n-1} \to \mathbb{C}$. Consequently,
\begin{equation} \label{e:contradiction}
 \frac{\partial_rf(x)}{|x|} + \frac{n}{2} \frac{f(x)}{|x|^2}  = \frac{\psi(\frac{x}{|x|})}{|x|^{\frac{n}{2}}}.
\end{equation}
The $L^2(\mathbb{R}^n)$ norm of the function on the left-hand side of \eqref{e:contradiction} is controlled by $\|\Delta f\|_{2}$, and this quantity is finite for $f \in H^2(\mathbb{R}^n)$. The fact that the second term is controlled by $\|\Delta f\|_{2}$ follows immediately from \eqref{e:orthogonalR}. For the first term, we use the so-called Hardy--Rellich type inequality
\begin{equation} \label{e:HardyRellich}
C_{n} \bigg\| \frac{\nabla f}{|x|} \bigg\|_{2} \leq \| \Delta f\|_{2}
\end{equation}
for $f \in H^2(\mathbb{R}^n)$, where $n \geq 3$ and $C_{n}$ is a strictly positive constant. This inequality goes back to \cite{t-z} for $n \geq 5$, and the extension to $n = 3,4$ can be found in \cite{Beckner} and  \cite{gh-m} (see also \cite{Cazacu}).

On the other hand, it is easy to see that the $L^2(\mathbb{R}^n)$ norm of the function on the right-hand side of \eqref{e:contradiction} is infinite, unless $\psi = 0$. Thus, it follows from \eqref{e:psirep} that $f$ is of the form $f(x) = |x|^{-\frac{n}{2}} \phi(\frac{x}{|x|})$ and, since $f \in H^2(\mathbb{R}^n)$, this forces $f$ to be zero.


\begin{thebibliography}{[99]}
\bibitem{a-s}
{\sc Adimurthi and S. Santra}, 
{\it Generalized Hardy--Rellich inequalities in critical dimension and its applications}, 
Commun. Contemp. Math. {\bf 11} (2009), 367--394. 

\bibitem{Beckner}
{\sc W. Beckner}, 
\textit{Weighted inequalities and Stein--Weiss potentials}, 
Forum Math. \textbf{20} (2008), 587--606.

\bibitem{b-m-o}
{\sc N. Bez, S. Machihara, and T. Ozawa},
{\it Hardy type inequalities with spherical derivatives\/},
SN Partial Differ. Equ. Appl. 1, 5 (2020).

\bibitem{BL}
{\sc M. Sh. Birman and A. Laptev}, 
{\it The negative discrete spectrum of a two-dimensional Schr\"odinger operator},
Comm. Pure Appl. Math. \textbf{49} (1996), 967--997.

\bibitem{ca}
{\sc P. Caldiroli}, 
{\it Radial and non radial ground states for a class of dilation invariant fourth order
semilinear elliptic equations on $\Bbb R^n$}, 
Commun. Pure Appl. Anal. {\bf 13} (2014), 811--821. 

\bibitem{CM}
{\sc P. Caldiroli and R. Musina}, 
{\it Rellich inequalities with weights}, 
Calc. Var. Partial Differential Equations  {\bf 45} (2012), 147--164. 

\bibitem{c-c-f}
{\sc B. Cassano, L. Cossetti, and L. Fanelli}, 
{\it Improved Hardy-Rellich inequalities}, 
Commun. Pure Appl. Anal. {\bf 21} (2022), 867--889.

\bibitem{c-p}
{\sc B. Cassano and F. Pizzichillo}, 
{\it Self-adjoint extensions for the Dirac operator with Coulomb-type spherically symmetric potentials}, 
Lett. Math. Phys. {\bf 108} (2018), 2635--2667.

\bibitem{Cazacu}
{\sc C. Cazacu}, 
{\it A new proof of the Hardy--Rellich inequality in any dimension}, 
Proc. Roy. Soc. Edinburgh Sect. A {\bf 150}, 2894--2904.

\bibitem{da-h}
{\sc E. B. Davies and A. M. Hinz}, 
{\it Explicit constants for Rellich inequalities in $L_p(\Omega)$}, 
Math. Z. {\bf 227} (1998), 511--523.

\bibitem{DLP}
{\sc N. T. Duy, N. Lam and L. L. Phi}, 
{\it Improved Hardy inequalities and weighted Hardy type inequalities with spherical derivatives}, 
Rev. Math. Complut., DOI: 10.1007/s13163-020-00379-3.


\bibitem{e-f}
{\sc T. Ekholm and R. Frank}, 
{\it On {L}ieb--{T}hirring inequalities for {S}chr\"odinger operators
              with virtual level\/}, 
Comm. Math. Phys.
{\bf 264} (2006), 725--740.

\bibitem{e-l}
{\sc W. D. Evans and R. T. Lewis}, 
{\it On the Rellich inequality with magnetic potentials}, 
Math. Z. {\bf 251} (2005), 267--284.



\bibitem{g-g-m}
{\sc F. Gazzola, H. C. Grunau, and E. Mitidieri}, 
{\it Hardy inequalities with optimal constants and remainder terms}, 
Trans. Amer. Math. Soc. {\bf 356} (2004), 2149--2168.

\bibitem{gh-m}
{\sc N. Ghoussoub and A. Moradifam}, 
{\it Bessel pairs and optimal Hardy and Hardy--Rellich inequalities}, 
Math. Ann. {\bf 349} (2011), 1--57. 


\bibitem{m-o-w5}
{\sc S. Machihara, T. Ozawa, and H. Wadade},
{\it Remarks on the Hardy type inequalities with remainder terms in the framework 
	of equalities\/}, Adv. Stud. Pure Math. {\bf 81} (2019), 247--258. 

\bibitem{m-o-w4}
{\sc S. Machihara, T. Ozawa, and H. Wadade},
{\it Remarks on the {R}ellich inequality\/},
Math. Z.
{\bf 286} (2017), 1367--1373. 


\bibitem{Rellich}
{\sc F. Rellich}, 
{\it Halbbeschr\"ankte Differentialoperatoren h\"oherer Ordnung}, 
Proceedings of the International Congress of Mathematicians 1954, Amsterdam, vol. III, pages 243--250. 

\bibitem{RellichBook}  {\sc F. Rellich},  \textit{Perturbation theory of eigenvalue problems}, CRC Press (1969).
	

\bibitem{RS}
{\sc M. Ruzhansky and D. Suragan}, 
{\it Hardy and Rellich inequalities, identities, and sharp remainders on homogeneous groups}, 
Adv. Math. {\bf 317} (2017), 799--822.

\bibitem{sch}
{\sc U. W. Schmincke}, 
{\it Essential selfadjointness of a Schr\"odinger operator with strongly singular potential}, 
Math. Z. {\bf 124} (1972), 47--50. 



\bibitem{t-z}
{\sc A. Tertikas and N. B. Zographopoulos}, 
{\it Best constants in the Hardy--Rellich inequalities and related improvements}, 
Adv. Math. {\bf 209} (2007), 407--459.

\bibitem{ya}
{\sc D. Yafaev}, 
{\it Sharp constants in the Hardy--Rellich inequalities}, 
J. Funct. Anal. {\bf 168} (1999), 121--144.
\end{thebibliography}
\end{document}